\newtheorem{theorem}{Theorem} \newtheorem{lemma}[theorem]{Lemma} \newtheorem{corollary}[theorem]{Corollary} \newtheorem{proposition}[theorem]{Proposition}   \newtheorem*{problem}{Open problem} 
\theoremstyle{definition} 
\newcommand{\po}{$1$-p.o.}
\title{Partial characterizations of $1$-perfectly orientable graphs} 
\author{ Tatiana Romina Hartinger\\ \small University of Primorska, UP IAM, Muzejski trg 2, SI6000 Koper, Slovenia\\ \small \texttt{tatiana.hartinger@iam.upr.si}\\ \and Martin Milani\v c\\ \small University of Primorska, UP IAM, Muzejski trg 2, SI6000 Koper, Slovenia\\ \small University of Primorska, UP FAMNIT, Glagolja\v ska 8, SI6000 Koper, Slovenia\\ \small \texttt{martin.milanic@upr.si} }
\begin{document} \maketitle \begin{abstract} We study the class of $1$-perfectly orientable graphs, that is, graphs having  an orientation in which every out-neighborhood induces a tournament. $1$-perfectly orientable graphs form a common generalization of chordal graphs and circular arc graphs. Even though they can be recognized in polynomial time, little is known about their structure. In this paper, we develop several results on $1$-perfectly orientable graphs. In particular, we: (i) give a characterization of $1$-perfectly orientable graphs in terms of edge clique covers, (ii) identify several graph transformations preserving the class of $1$-perfectly orientable graphs, (iii) exhibit an infinite family of minimal forbidden induced minors for the class of $1$-perfectly orientable graphs, and (iv) characterize the class of $1$-perfectly orientable graphs within the classes of cographs and of co-bipartite graphs. The class of $1$-perfectly orientable co-bipartite graphs coincides with the class of co-bipartite circular arc graphs. \end{abstract}

\noindent\textbf{Keywords:} $1$-perfectly orientable graph, fraternally orientable graph, in-tournament digraph, structural characterization of families of graphs, cograph, co-bipartite graph, circular arc graph\\ \noindent\textbf{MSC (2010):} 05C20, 05C75, 05C05, 05C62, 05C69

\section{Introduction}

Many graph classes studied in the literature can be defined with the existence of orientations satisfying certain properties. In this paper we study graphs having an orientation that is an {\it out-tournament}, that is, a digraph in which in the out-neighborhood of every vertex induces an orientation of a complete graph. (An {\it in-tournament} is defined similarly.) Following the terminology of Kammer and Tholey~\cite{MR3152051}, we say that an orientation of a graph is {\it $1$-perfect} if the out-neighborhood of every vertex induces a tournament, and that a graph is {\it $1$-perfectly orientable} ({\it $1$-p.o.}~for short) if it has  a $1$-perfect orientation.\footnote{This notion is not to be confused with the notion of {\em perfectly orientable graphs}, defined as graphs that admit a linear vertex ordering (called a {\it perfect order}) $<$ such that $G$ contains no $P_4$ $a,b,c,d$ with $a < b$ and $d < c$.} In~\cite{MR3152051}, the authors introduced a hierarchy of graph classes, with \po~graphs being the smallest member of the family.
 Namely, they defined a graph to be {\it $k$-perfectly orientable} if in some orientation of it every out-neighborhood induces a disjoint union of at
 most $k$
 tournaments. In~\cite{MR3152051}, several approximation algorithms were given for optimization problems on
$k$-perfectly orientable graphs and related classes, and it is shown that recognizing $k$-perfectly orientable graphs is NP-complete for all $k\ge 3$. While the recognition complexity of $2$-perfectly orientable graphs seems to be unknown, \po~graphs can be recognized in polynomial time via a reduction to $2$-SAT~\cite[Theorem 5.1]{MR1244934}. A polynomial time algorithm for recognizing \po~graphs that works directly on the graph was  given by Urrutia and Gavril~\cite{MR1161986}.

The notion of \po~graphs was introduced in 1982 by Skrien~\cite{MR666799} (under the name $\{B_2\}$-graphs), where the problem of characterizing \po~graphs was posed. By definition, \po~graphs are exactly the graphs that admit an orientation that is an out-tournament. A simple arc reversal argument shows that that \po~graphs are exactly the graphs that admit an orientation that is an in-tournament. Such orientations were called {\it fraternal orientations} in several papers~\cite{MR1161986,MR1287025,MR1292980,MR1449722,MR1246675,MR2323998, MR2548660}. \po~graphs are also exactly the underlying graphs of the so-called locally in-semicomplete digraphs (which are defined similarly as in-tournaments, except that pairs of oppositely oriented arcs are allowed), see~\cite{MR2472389}.

While a structural understanding of 1-p.o.~graphs is still an open question, partial results are known. Bang-Jensen et al.~\cite{MR1244934} (see also~\cite{prisner1988familien}) proved a topological property of \po~graphs (stating that every \po~graph is $1$-homotopic), gave characterizations of \po~line graphs and of \hbox{\po}~triangle-free graphs, and showed that every graph representable as the intersection graph of connected subgraphs of unicyclic graphs is $1$-p.o. This implies that all chordal graphs and all circular arc graphs are \po, as observed already in~\cite{MR1161986} and in~\cite{MR1161986,MR666799}, respectively. Moreover, Bang-Jensen et al.~showed in~\cite{MR1244934} that every graph having a unique induced cycle of order at least $4$ is~\po, and conjectured that every \po~graph can be represented as the intersection graph of connected subgraphs of a cactus graph. The subclass of \po~graphs consisting of graphs that admit an orientation that is both an in-tournament and an out-tournament was characterized in~\cite{MR666799} (see also~\cite{MR1081957}) as exactly the class of proper circular arc graphs.

We now briefly discuss the characterizations of \po~graphs as intersection graphs and with forbidden substructures given by Urrutia and Gavril in~\cite{MR1161986}. The characterizations state that a graph is \po~if and only if it is the vertex-intersection graph of a family of mutually graftable subtrees in a graph (where two rooted subtrees of a graph are said to be {\it graftable} if their intersection is empty or contains one of their roots), if and only if it does not contain an induced {\it bicycle}, where a bicycle is a graph that can be written as a union of two monocycles satisfying a certain condition. A {\it monocycle} is a graph $G$ having a closed walk $C = (x_0,x_1,\ldots,x_k)$ with $x_k = x_0$, $k\ge 4$, and a walk $P = (y_1,\ldots, y_\ell)$ with $\ell \ge 2$ such that $V(G) = \{x_0,x_1,\ldots, x_k\}\cup \{y_1,\ldots, y_\ell\}$, for all $i\in \{1,\ldots, k\}$ (indices modulo $k$) vertex $x_i$ is neither equal nor adjacent to vertex $x_{i+2}$, and for all $j\in \{1,\ldots, \ell-2\}$ vertex $y_j$ is neither equal nor adjacent to vertex $y_{j+2}$; moreover, vertex $y_\ell$ is either adjacent to two consecutive vertices $s,t$ of $C$, or it is in $C$ and its immediate neighbours in $C$ are denoted $s,t$, and, finally, $y_{\ell-1}$ is distinct from and not adjacent to $s, t$. Unfortunately, the above characterization is not constructive and does not lead to an explicit list of minimal forbidden induced subgraphs for the class of \po~graphs. Bicycles can be recognized in polynomial time using the polynomial time recognition algorithm for \po~graphs, but are not structurally understood. It is also not clear how to detect an induced bicycle in a given non-$1$-perfectly orientable graph, except by running the algorithm by Urrutia and Gavril and  then applying the arguments given in the proof \hbox{of~\cite[Theorem 5]{MR1161986}}.

\newpage
In this paper, we prove several further results regarding $1$-perfectly orientable graphs.
 Our results can be summarized as follows: \begin{itemize}
  \item We give a characterization of \po~graphs in terms of edge clique covers similar to a known characterization of squared graphs due to
      Mukhopadhyay (Theorem~\ref{thm:characterization-edge-clique-covers}).
  \item We identify several graph transformations preserving the class of \po~graphs (Theorem~\ref{prop:operations}).
  In particular, we show that the class of \po~graphs is closed under taking induced minors. We also study the behavior of \po~graphs under the join
  operation (Theorem~\ref{prop:co-bipartite-join}), which  motivates the study of \po~co-bipartite graphs.
 \item We show that within the class of co-bipartite graphs, \po~graphs coincide with circular arc graphs
     (Theorem~\ref{thm:co-bipartite-circular-arc}). This adds to the list of the many characterizations of co-bipartite circular arc graphs.
  \item We identify several minimal forbidden induced minors for the class of \po~graphs, including
   ten small specific graphs and two infinite families: the complements of even cycles of length at least six and the complements of the graphs
   obtained from odd cycles by adding a component consisting of a single edge (Theorem~\ref{thm:non-1-po}).
\item Finally, we characterize \po~cographs, obtaining characterizations both in terms of forbidden induced subgraphs and
    in terms of structural properties (Theorem~\ref{thm:cograph}).
 \end{itemize}

The paper is structured as follows. In Section~\ref{sec:notation} we fix the notation. In Section~\ref{sec:edge-clique-covers} we give a characterization of \po~graphs in terms of edge clique covers. In Section~\ref{sec:operations}, we identify several graph transformations preserving the class of \po~graphs. In Section~\ref{sec:co-bipartite-graphs} we characterize \po~graphs within the class of complements of bipartite graphs.
Minimal forbidden induced minors for the class of \po~graphs are discussed in Section~\ref{sec:minors}. We conclude the paper with characterizations of \po~cographs in Section~\ref{sec:cographs}.

\section{Preliminaries}\label{sec:notation}

In this section, we recall the definitions of some of the most used notions in this paper. All graphs in this paper are simple and finite, but may be either directed (in which case we will refer to them as {\it digraphs}) or undirected (in which case we will refer to them as {\it graphs}). We use standard graph and digraph terminology. In particular, the vertex and edge sets of a graph $G$ will be denoted by $V(G)$ and $E(G)$, respectively, and the vertex and arc sets of a digraph $D$ will be denoted by $V(D)$ and $A(D)$. An edge in a graph connecting vertices $u$ and $v$ will be denoted by $\{u,v\}$ or simply $uv$. An arc in a graph connecting vertices $u$ and $v$ will be denoted by $(u,v)$. We will also use the notation $u\rightarrow v$ to denote the fact that an edge $uv$ of a graph $G$ is oriented from $u$ to $v$ in an orientation of $G$. The set of all vertices adjacent to a vertex $v$ in a graph $G$ will be denoted by $N_{G}(v)$, and its cardinality, the {\it degree of $v$ in $G$}, by $d_G(v)$. The {\it closed neighborhood} of $v$ in $G$ is the set $N_G(v)\cup \{v\}$, denoted by $N_G[v]$. An \emph{orientation} of a graph $G = (V, E)$ is a digraph $D= (V,A)$ obtained by assigning  a direction to each edge of $G$. A {\it tournament} is an orientation of a complete graph. Given a digraph $D$, the \emph{in-neighborhood} of a vertex $v$ in $D$, denoted by $N^{-}_{D}(v)$, is the set of all vertices $w$ such that $(w,v) \in A$. Similarly, the \emph{out-neighborhood} of $v$ in $D$ is the set of all vertices $w$ such that $(v,w) \in A$. The cardinalities of  the in- and the out-neighborhood of $v$ are the {\it in-degree} and the {\it out-degree of $v$} and are denoted by
 $d^{-}_{D}(v)$ and  $d^{+}_{D}(v)$, respectively.

Given two graphs $G$ and $H$, their {\it disjoint union} is the graph $G+H$ with vertex set $V(G)\cup V(H)$ (disjoint union) and edge set $E(G)\cup E(H)$. We write $2G$ for $G+G$. The {\it join} of two graphs $G$ and $H$ is the graph denoted by $G\ast H$ and obtained from the disjoint union of $G$ and $H$ by adding to it all edges joining vertex of $G$ with a vertex of $H$. Given a graph $G$ and a subset $S$ of its vertices, we denote by $G[S]$ the {\it subgraph of $G$ induced by $S$}, that is, the graph with vertex set $S$ and edge set $\{uv\in E(G)\mid u,v\in S\}$. By $G-S$ we denote the subgraph of $G$ induced by $V(G)\setminus S$, and when $S = \{v\}$ for a vertex $v$, we also write $G-v$.  The graph $G/e$ obtained from $G$ by \emph{contracting} an edge $e=uv$ is defined as $G/e=(V,E)$ where $V=(V(G)\setminus \{u,v\}) \cup \{w\}$ with $w$ a new vertex and $E=E(G-\{u,v\}) \cup \{wx \mid x\in N_{G}(u) \cap N_{G}(v)\}$. A {\it clique} (resp.,~{\it independent set}) in a graph $G$ is a set of pairwise adjacent (resp.,~non-adjacent) vertices of $G$. The {\it complement} of a graph $G$ is the graph $\overline{G}$ with the same vertex set as $G$ in which two distinct vertices are adjacent if and only if they are not adjacent in $G$. The fact that two graphs $G$ and $H$ are isomorphic to each other will be denoted by $G\cong H$. In this paper we will often not distinguish between isomorphic graphs. The path, the cycle, and complete graph on $n$ vertices will be denoted by $P_n$, $C_n$, and $K_n$, respectively, and the complete bipartite graph with parts of size $m$ and $n$ by $K_{m,n}$. Given two graphs $G$ and $H$, we say that $G$ is {\it $H$-free} if no induced subgraph of $G$ is isomorphic to $H$. For further background on graphs, we refer to~\cite{MR2744811}, on graph classes, to~\cite{MR2063679, MR1686154}, and on digraphs, to~\cite{MR2472389}.

A connected graph is said to be {\it unicyclic} if it has exactly one cycle.
The following consequence of~\cite[Corollary 5.7]{MR1244934} will be used in some of our proofs.

\begin{proposition}\label{prop:unicyclic} Every unicyclic graph is $1$-perfectly orientable.\end{proposition}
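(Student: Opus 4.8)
The plan is to construct an explicit $1$-perfect orientation of a given unicyclic graph $G$, relying on the simple observation that \emph{any} orientation $D$ in which every vertex has out-degree at most $1$ is automatically $1$-perfect: if $|N^+_D(v)|\le 1$ for all $v$, then each out-neighborhood induces either the empty digraph or a single vertex, both of which are trivially tournaments. It therefore suffices to orient $G$ so that $d^+_D(v)\le 1$ for every $v$; in other words, it suffices to observe that a connected unicyclic graph is a pseudoforest and orient it accordingly.

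First I would make the structure of $G$ explicit. Let $C=(v_1,v_2,\ldots,v_k,v_1)$ be the unique cycle of $G$ and consider the spanning subgraph $G-E(C)$ obtained by deleting the edges of $C$. Since $C$ is the only cycle of $G$, the graph $G-E(C)$ is a forest, and moreover each of its connected components contains exactly one vertex of $C$: a shortest path in $G-E(C)$ between two distinct cycle vertices would have only non-cycle internal vertices and, together with a segment of $C$, would close up into a cycle different from $C$, contradicting uniqueness; and every non-cycle vertex is joined to $C$ inside $G-E(C)$ because an edge incident to a non-cycle vertex is never a cycle edge. Rooting each tree component at the unique cycle vertex it contains then yields rooted trees $T_1,\ldots,T_k$ partitioning $V(G)$, with $v_i$ the root of $T_i$.

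Next I would define the orientation $D$ by orienting every edge of each tree $T_i$ towards its root $v_i$ (so each non-cycle vertex points to its parent) and orienting the edges of $C$ consistently as a directed cycle $v_1\to v_2\to\cdots\to v_k\to v_1$. Then every non-cycle vertex has out-degree exactly $1$ (the arc to its parent), while every cycle vertex $v_i$ receives all of its tree edges as incoming arcs and emits only the single cycle arc $v_i\to v_{i+1}$, so $d^+_D(v_i)=1$ as well. Hence $d^+_D(v)\le 1$ for every $v$, and by the opening observation $D$ is $1$-perfect; as a consistency check, $|E(G)|=|V(G)|$ for a connected unicyclic graph, matching the fact that every out-degree equals $1$.

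I do not expect a genuine obstacle here. The only step requiring care is the structural claim that $G-E(C)$ splits into trees each meeting $C$ in exactly one vertex, since this is precisely what makes the orientation well defined; everything after that is a one-line degree count. (The statement is in any case recorded as a consequence of prior work on \po~graphs, but the direct orientation above is short and fully self-contained.)
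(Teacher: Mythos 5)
Your proof is correct, and it takes a genuinely different route from the paper. The paper gives no argument at all: Proposition~\ref{prop:unicyclic} is recorded there as an immediate consequence of~\cite[Corollary 5.7]{MR1244934}, i.e., of the result of Bang-Jensen et al.\ that every intersection graph of connected subgraphs of a unicyclic graph is $1$-perfectly orientable. You instead give a self-contained, constructive argument: split $G$ into its unique cycle $C$ and the forest $G-E(C)$, whose components are trees each meeting $C$ in exactly one vertex; orient every tree edge toward the cycle vertex of its component and orient $C$ cyclically; and invoke the correct, nicely isolated observation that any orientation with all out-degrees at most $1$ is vacuously $1$-perfect. The citation buys the paper brevity and a link to a stronger intersection-graph theorem; your construction buys elementarity, an explicit linear-time orientation, and in fact a more general conclusion for free---the same degree count shows that every pseudoforest (every graph each of whose components contains at most one cycle) is $1$-perfectly orientable, using closure under disjoint union for the tree components. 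One minor imprecision, easily repaired: a shortest path in $G-E(C)$ between two \emph{prescribed} cycle vertices may still pass through a third cycle vertex internally, so to produce a path with no internal cycle vertices you should take a pair of cycle vertices of the same component at minimum distance (equivalently, a minimal subpath joining two cycle vertices); the resulting cycle then uses an edge outside $E(C)$ and contradicts uniqueness exactly as you intend. This is cosmetic and does not affect the validity of the proof.
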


\section{A characterization in terms of edge clique covers}\label{sec:edge-clique-covers}

A graph $G$ is said to {\it have a square root} if there exists a graph $H$ with $V(H) = V(G)$ such that for all $u,v\in V(G)$, we have $uv\in E(G)$ if and only if the distance in $H$ between $u$ and $v$ is either $1$ or $2$. An {\it edge clique cover}  in a graph $G$ is a collection of cliques $\{C_1,\ldots, C_k\}$ in $G$ such that every edge of $G$ belongs to some clique $C_i$. In this section, we characterize \po~graphs in terms of edge clique covers, in a spirit similar to the well known  Mukhopadhyay's characterization of graphs admitting a square root, which we now recall.

\begin{theorem}[Mukhopadhyay~\cite{MR0210616}]\label{thm:squares} A graph $G$ with $V(G) = \{v_1,\ldots, v_n\}$ has a square root if and only if $G$ has an edge clique cover $\{C_1,\ldots, C_n\}$ such that the following two conditions hold: 
\begin{enumerate}[(a)]
  \item $v_i\in C_i$ for all $i$,
  \item for every edge $v_iv_j\in E(G)$, we have    $v_i\in C_j$ if and only if
 $v_j\in C_i$.
 \end{enumerate}
\end{theorem}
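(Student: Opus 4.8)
The plan is to prove both implications through the natural correspondence between a square root $H$ and an edge clique cover of $G$ indexed by the closed neighborhoods of $H$.

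For the forward direction, suppose $G = H^2$ for some graph $H$ on the same vertex set. I would set $C_i := N_H[v_i]$ for each $i$. Each $C_i$ is a clique of $G$, since any two distinct vertices of $N_H[v_i]$ are at distance at most $2$ in $H$ and hence adjacent in $G$. The family covers every edge: an edge $v_iv_j$ of $G$ corresponds to two vertices at distance $1$ or $2$ in $H$; if the distance is $1$ then $\{v_i,v_j\} \subseteq C_i$, and if it is $2$ then $v_i, v_j \in N_H(w) \subseteq C_w$ for a common neighbor $w$. Condition (a) is immediate because $v_i \in N_H[v_i]$. For condition (b), observe that for $i \ne j$ we have $v_i \in C_j = N_H[v_j]$ exactly when $v_iv_j \in E(H)$; by symmetry of $E(H)$ this is also exactly when $v_j \in C_i$, so the two memberships are equivalent.

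For the converse, suppose $G$ has an edge clique cover $\{C_1, \ldots, C_n\}$ satisfying (a) and (b). I would define a graph $H$ on $V(G)$ by declaring $v_iv_j \in E(H)$ precisely when $v_i \in C_j$. First I note that $v_i \in C_j$ with $i \ne j$ forces $v_iv_j \in E(G)$, since by (a) the clique $C_j$ contains $v_j$; thus condition (b) applies and gives $v_i \in C_j \iff v_j \in C_i$, so the relation defining $E(H)$ is symmetric and well defined. It then remains to verify $H^2 = G$. The inclusion $E(H^2) \subseteq E(G)$ is the easy half: a distance-$1$ pair in $H$ lies in a common clique $C_j$, and a distance-$2$ pair $v_i, v_j$ with common $H$-neighbor $v_k$ satisfies $v_i, v_j \in C_k$, so adjacency in $G$ follows from the clique property of $C_k$.

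The step I expect to be the crux is the reverse inclusion $E(G) \subseteq E(H^2)$, where condition (b) does the essential work. Given $v_iv_j \in E(G)$, the cover supplies a clique $C_k$ with $v_i, v_j \in C_k$. If $k \in \{i,j\}$ the pair is already adjacent in $H$. Otherwise I must promote the memberships $v_i \in C_k$ and $v_j \in C_k$ into genuine edges $v_iv_k$ and $v_jv_k$ of $H$: this is exactly where (b) is needed, since $v_i \in C_k$ together with (b) applied to the $G$-edge $v_iv_k$ yields $v_k \in C_i$, hence $v_iv_k \in E(H)$, and symmetrically $v_jv_k \in E(H)$. The resulting path $v_i\,v_k\,v_j$ certifies distance at most $2$ in $H$, so $v_iv_j \in E(H^2)$. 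Hence $H$ is a square root of $G$, completing the proof.
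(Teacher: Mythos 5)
Your proof is correct. Note first that the paper itself offers no proof of this statement: it is Mukhopadhyay's theorem, quoted from the cited reference as background for Theorem~\ref{thm:characterization-edge-clique-covers}, so there is no internal argument to compare yours against. Your argument is the standard one, and both directions check out: taking $C_i = N_H[v_i]$ in the forward direction gives cliques of $G=H^2$ covering all edges (distance-$1$ pairs lie in an endpoint's clique, distance-$2$ pairs in a common neighbor's clique), and condition (b) holds because $v_i \in N_H[v_j] \iff v_iv_j \in E(H) \iff v_j \in N_H[v_i]$. In the converse, defining $E(H)$ by $v_iv_j \in E(H) \iff v_i \in C_j$ is legitimate precisely because (a) and (b) together make this relation symmetric, and both inclusions $E(H^2) \subseteq E(G)$ and $E(G) \subseteq E(H^2)$ go through as you wrote them.

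One small presentational remark: in the reverse inclusion you say condition (b) is ``exactly where'' the crux lies, invoking it to convert $v_i \in C_k$ into the edge $v_iv_k \in E(H)$. But under your own definition of $E(H)$, the membership $v_i \in C_k$ \emph{is} the edge $v_iv_k \in E(H)$ directly; no appeal to (b) is needed at that point. The place where (b) genuinely earns its keep is the well-definedness (symmetry) of $H$, which you established earlier -- and, implicitly, in ruling out the failure mode that distinguishes this theorem from the paper's Theorem~\ref{thm:characterization-edge-clique-covers}: with only the disjunction of condition 3(b) there, a distance-$2$ pair in $H$ need not lie in a common clique, which is exactly why the disjunctive version characterizes $1$-perfectly orientable graphs rather than squared graphs. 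This is a matter of emphasis, not a gap.
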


In the original statement of the theorem, the second condition is required for all $i\neq j$, but since $v_iv_j\not\in E(G)$ clearly implies $v_i\not \in C_j$ and $v_j\not \in C_i$, the equivalence in condition 2 automatically holds for all non-adjacent vertex pairs.

\begin{theorem}\label{thm:characterization-edge-clique-covers} For every graph $G$ with $V(G) = \{v_1,\ldots, v_n\}$, the following conditions are equivalent: \begin{enumerate}
  \item $G$ is $1$-perfectly orientable.
  \item $G$ has an edge clique cover $\{C_1,\ldots, C_n\}$
such that the following two conditions hold: \begin{enumerate}
  \item $v_i\in C_i$ for all $i$,
  \item for every edge $v_iv_j\in E(G)$, we have $v_i\in C_j$ or
 $v_j\in C_i$, but not both.
 \end{enumerate}
\item $G$ has an edge clique cover $\{C_1,\ldots, C_n\}$ such that the following two conditions hold: \begin{enumerate}
  \item $v_i\in C_i$ for all $i$,
  \item for every edge $v_iv_j\in E(G)$, we have $v_i\in C_j$ or
 $v_j\in C_i$.
 \end{enumerate}
 \end{enumerate}
\end{theorem}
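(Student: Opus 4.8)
The plan is to prove the equivalences through the cyclic chain $1 \Rightarrow 2 \Rightarrow 3 \Rightarrow 1$. The guiding idea is a dictionary between $1$-perfect orientations and their closed out-neighborhoods: if $D$ is a $1$-perfect orientation of $G$, then for each $i$ the set $N^{+}_{D}[v_i] := \{v_i\} \cup N^{+}_{D}(v_i)$ is a clique of $G$, since $N^{+}_{D}(v_i)$ induces a tournament (hence a complete subgraph) and $v_i$ is adjacent to each of its out-neighbors. Conversely, an edge clique cover in which each $v_i$ ``owns'' a set of neighbors $C_i \setminus \{v_i\}$ should be read as an orientation pointing $v_i$ towards the vertices it owns. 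Matching the two conditions edge by edge will be the whole content of the proof.

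For $1 \Rightarrow 2$, I would fix a $1$-perfect orientation $D$ and put $C_i := N^{+}_{D}[v_i]$. Each $C_i$ is a clique containing $v_i$, so condition (a) holds. Given an edge $v_iv_j$, it is oriented in exactly one direction, say $v_i \rightarrow v_j$; then $v_j \in N^{+}_{D}(v_i) \subseteq C_i$, so the edge is covered and $\{C_1, \ldots, C_n\}$ is an edge clique cover, while $v_i \notin N^{+}_{D}(v_j)$ gives $v_i \notin C_j$. Thus exactly one of $v_j \in C_i$ and $v_i \in C_j$ holds, which is condition (b) of statement 2. The implication $2 \Rightarrow 3$ is then immediate, as the ``but not both'' clause of 2(b) is stronger than the disjunction in 3(b), so the same cover works.

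For $3 \Rightarrow 1$ I would reverse the construction. Given a cover as in statement 3, orient each edge $v_iv_j$ by the rule that $v_i \rightarrow v_j$ whenever $v_j \in C_i$; when only $v_i \in C_j$ holds I orient $v_j \rightarrow v_i$, and edges with both containments are oriented by a fixed tie-break (say by index order) so that the orientation is well defined, which is possible precisely because 3(b) guarantees at least one containment for every edge. The essential observation is that $v_i \rightarrow v_j$ forces $v_j \in C_i$, so $N^{+}(v_i) \subseteq C_i \setminus \{v_i\}$; as $C_i$ is a clique, $N^{+}(v_i)$ induces a complete graph, i.e.\ a tournament, and the orientation is $1$-perfect. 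I expect the only point needing care to be this last step: edges satisfying both $v_i \in C_j$ and $v_j \in C_i$ must be assigned a single direction without violating $N^{+}(v_i) \subseteq C_i$, and a fixed tie-breaking rule does exactly this. This redundancy is precisely the slack that the stronger condition in statement 2 eliminates, explaining why statements 2 and 3 turn out to be equivalent despite their apparent difference.
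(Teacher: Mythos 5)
Your proposal is correct and follows essentially the same route as the paper: both directions use the same dictionary $C_i = \{v_i\}\cup N_D^+(v_i)$, and your tie-break by index order for edges with both containments is exactly the paper's rule of orienting from lower to higher index when $v_j \in C_i$. The only difference is presentational: the paper verifies $N_D^+(v_i)\subseteq C_i$ via an explicit chain of set inclusions, while you argue edge by edge that $v_i\rightarrow v_j$ forces $v_j\in C_i$.
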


Before proving Theorem~\ref{thm:characterization-edge-clique-covers}, we give two remarks. First, note that the difference between Theorem~\ref{thm:squares} and the equivalence of conditions 1 and 3 in Theorem~\ref{thm:characterization-edge-clique-covers} consists in replacing the equivalence in condition (b) of Theorem~\ref{thm:squares} with disjunction. This seemingly minor difference is in sharp contrast with the fact that recognizing graphs admitting a square root is NP-complete~\cite{MR1293386}, while \po~graphs can be recognized in polynomial time. Second, recall that a {\it pointed set} is a pair $(S,v)$ where $S$ is a nonempty set and $v\in S$. To every family ${\cal S} = \{(S_1,v_1),\ldots, (S_n,v_n)\}$ of pointed sets, one can associate a graph, the so called {\it catch graph} of ${\cal S}$ by setting $V(G) = \{v_1,\ldots, v_n\}$ and joining two distinct vertices $v_i$ and $v_j$ if and only if $v_i\in S_j$ or $v_j\in S_i$ (see, e.g.~\cite{MR1672910}). The equivalence between conditions 1 and 3 in the above theorem gives another proof of the fact that every \po~graph is the catch graph of a family of pointed sets, which also follows from the characterization of \po~graphs due to Urrutia and Gavril (stating that a graph is \po~if and only if it is the vertex-intersection graph of a family of mutually graftable subtrees in a graph)~\cite{MR1161986}.

\begin{proof}[Proof of Theorem~\ref{thm:characterization-edge-clique-covers}] First, we show the implication $1\Rightarrow 2$. Given a $1$-perfect orientation $D$ of a \po~graph $G$ with $V(G) = \{v_1,\ldots, v_n\}$, we define an edge clique cover $\{C_1,\ldots, C_n\}$ of $G$ by setting $C_i = \{v_i\}\cup N_D^+(v_i)$. By definition, each $C_i$ contains $v_i$, and, since $D$ is $1$-perfect, is a clique in $G$. Note that for all $i\neq j$, we have $v_j\in C_i$ if and only if $(v_i,v_j)\in A(D)$. In particular $v_j\in C_i$ and $v_i\in C_j$ cannot happen simultaneously. Since for every edge $v_iv_j\in E(G)$, we have either $(v_i,v_j)\in A(D)$ or $(v_j,v_i)\in A(D)$ but not both, condition 2(b) follows.

The implication $2\Rightarrow 3$ is trivial.

Finally, we show the implication $3\Rightarrow 1$. Suppose that $G$ has an edge clique cover $\{C_1,\ldots, C_n\}$ such that $v_i\in C_i$ for all $i$, and for every edge $v_iv_j\in E(G)$, $v_i\in C_j$ or $v_j\in C_i$. Define an orientation $D$ of $G$ as follows:
 for $1\le i<j\le n$ such that $v_iv_j\in E(G)$,
 set $(v_i,v_j)\in A(D)$ if $v_j\in C_i$, and
 $(v_j,v_i)\in A(D)$, otherwise.
 By definition, for every vertex $v_i\in V(G)$ we have
\begin{eqnarray*} 
 N_D^+(v_i) &=& \{v_j\mid j<i~\wedge~v_i\not\in C_j\}\cup \{v_j\mid j>i~\wedge~v_j\in C_i \}\\
 &  \subseteq & \{v_j\mid j<i~\wedge~v_j\in C_i\}\cup \{v_j\mid j>i~\wedge~v_j\in C_i\}~~\subseteq~~ C_i\,,
\end{eqnarray*} where the first inclusion relation holds due to condition 3(b).  Hence, $D$ is a $1$-perfect orientation of $G$, and $G$ is \po \end{proof}

For later use, we also record the following immediate consequences of Theorem~\ref{thm:characterization-edge-clique-covers}.

\begin{corollary}\label{cor:characterization-1po-complements} For every graph $G$ with $V(G) = \{v_1,\ldots, v_n\}$, the following conditions are equivalent: \begin{enumerate}
  \item $\overline{G}$ is $1$-perfectly orientable.
  \item $G$ has a collection of independent sets $\{I_1,\ldots, I_n\}$ such that the following two conditions hold:
\begin{enumerate}
  \item $v_i\in I_i$ for all $i$,
  \item for every non-adjacent vertex pair $v_iv_j\in E(\overline{G})$, we have $v_i\in I_j$ or
 $v_j\in I_i$.
\end{enumerate} \end{enumerate} \end{corollary}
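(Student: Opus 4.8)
The plan is to derive this corollary directly from the equivalence of conditions 1 and 3 in Theorem~\ref{thm:characterization-edge-clique-covers}, applied not to $G$ itself but to its complement $\overline{G}$. The key dictionary is that cliques in $\overline{G}$ are exactly independent sets in $G$, and that edges of $\overline{G}$ are exactly the non-adjacent vertex pairs of $G$. Under this translation, a family $\{C_1,\ldots,C_n\}$ of cliques in $\overline{G}$ becomes a family $\{I_1,\ldots,I_n\}$ of independent sets in $G$ (set $I_i = C_i$), condition 3(a) of the theorem (``$v_i\in C_i$ for all $i$'') becomes condition 2(a) of the corollary, and condition 3(b) (``for every edge $v_iv_j\in E(\overline{G})$ we have $v_i\in C_j$ or $v_j\in C_i$'') becomes condition 2(b) of the corollary essentially verbatim, once ``edge of $\overline{G}$'' is read as ``non-adjacent pair of $G$''.

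Concretely, I would first invoke Theorem~\ref{thm:characterization-edge-clique-covers} with input graph $\overline{G}$ (keeping the same labelled vertex set $\{v_1,\ldots,v_n\}$) to obtain that $\overline{G}$ is \po{} if and only if $\overline{G}$ admits an edge clique cover $\{C_1,\ldots,C_n\}$ satisfying 3(a) and 3(b). Then I would rewrite each $C_i$ as the independent set $I_i$ of $G$ via the dictionary above, which turns conditions 3(a) and 3(b) into conditions 2(a) and 2(b). This already gives both implications of the stated equivalence.

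The one point that deserves an explicit line — and the only place the argument is not a purely mechanical substitution — is the disappearance of the \emph{edge clique cover} requirement from the corollary's condition 2, which merely asks for a collection of independent sets. Here I would observe that conditions (a) and (b) together already force the family to cover every edge of $\overline{G}$: if $v_iv_j\in E(\overline{G})$, then by (b) we have, say, $v_i\in C_j$, and since also $v_j\in C_j$ by (a) and $C_j$ is a clique of $\overline{G}$, the edge $v_iv_j$ lies inside $C_j$. Thus the covering property is automatic, and omitting the explicit mention of an edge clique cover in condition 2 loses no information. I do not anticipate a genuine obstacle: all the content is carried by Theorem~\ref{thm:characterization-edge-clique-covers}, and the only thing requiring care is applying the complementation dictionary consistently, so that condition 2(b) is imposed on exactly the non-edges of $G$ rather than on its edges.
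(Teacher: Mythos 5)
Your proposal is correct and takes essentially the same route as the paper, which records this corollary as an immediate consequence of Theorem~\ref{thm:characterization-edge-clique-covers} applied to $\overline{G}$ under the standard complementation dictionary (cliques of $\overline{G}$ are independent sets of $G$, edges of $\overline{G}$ are non-edges of $G$). Your explicit observation that conditions (a) and (b) automatically make the family an edge clique cover of $\overline{G}$ is exactly the detail the paper leaves implicit, and it is verified correctly.
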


\begin{corollary}\label{cor:upper-bound-on-theta} The edges of every $1$-perfectly orientable graph with $n$ vertices can be covered by $n$ cliques. \end{corollary}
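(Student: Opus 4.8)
The plan is to read the statement directly off Theorem~\ref{thm:characterization-edge-clique-covers}, which has already done all of the structural work; nothing further is needed beyond invoking it. First I would fix an enumeration $V(G) = \{v_1,\ldots,v_n\}$ of the vertices of a given \po~graph $G$. Since $G$ satisfies condition~1 of Theorem~\ref{thm:characterization-edge-clique-covers}, the equivalence with condition~2 (or, equally well, condition~3) yields a family $\{C_1,\ldots,C_n\}$ of cliques of $G$ that is an edge clique cover, meaning every edge of $G$ lies in at least one $C_i$. This family is indexed by the $n$ vertices, hence consists of $n$ cliques, and these $n$ cliques cover all edges of $G$. That is exactly the assertion of the corollary, so the proof reduces to a one-line application of the theorem.

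The one point worth a brief remark is that some of the cliques $C_i$ may be degenerate. In the orientation underlying the cover, a vertex $v_i$ of out-degree zero produces $C_i = \{v_i\}$, a singleton that covers no edge. Under the definition of edge clique cover adopted in Section~\ref{sec:edge-clique-covers}, a single vertex is vacuously a clique and such $C_i$ are legitimate members of the cover; they merely inflate the count without contributing to the covering. Accordingly the bound is to be read as ``at most $n$ cliques,'' and if a smaller cover is desired one may simply discard the singletons. I therefore do not expect any genuine obstacle here: the entire content is packaged inside Theorem~\ref{thm:characterization-edge-clique-covers}, and the corollary is an immediate consequence, bounding the edge clique cover number of a \po~graph on $n$ vertices by $n$.
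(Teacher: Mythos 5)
Your proof is correct and matches the paper exactly: the paper likewise states this corollary as an immediate consequence of Theorem~\ref{thm:characterization-edge-clique-covers}, reading the bound directly off the edge clique cover $\{C_1,\ldots,C_n\}$ guaranteed by conditions 2/3. Your side remark about singleton cliques (from vertices of out-degree zero) is a fair clarification, and harmless, since singletons are vacuously cliques under the paper's definition.
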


Note that the converse of Corollary~\ref{cor:upper-bound-on-theta} does not hold. For example, the complement of the \hbox{$10$-vertex} graph $G_1$ (see Fig.~\ref{fig:1} on p.~\pageref{fig:1}) is not \po~(see Theorem~\ref{thm:non-1-po}), but can be edge-covered with (at most) $9$ cliques. 
(Determining if the edges of a given $n$-vertex graph can be covered by $n$ cliques is NP-complete~\cite{MR679638}; see also~\cite{MR712930}.)

\section{Operations preserving $1$-perfectly orientable graphs}\label{sec:operations}

In this section, we identify several operations preserving \po~graphs and characterize when the join of two graphs is $1$-p.o.
Two distinct vertices $u$ and $v$ in a graph $G$ are said to be \emph{true twins} if $N_{G}[u] = N_{G}[v]$, and \emph{false twins} if $N_{G}(u) = N_{G}(v)$. A vertex $v$ is \emph{simplicial} if its neighborhood forms a clique, and \emph{universal} if it is adjacent to every other vertex of the graph. The operations of adding a true twin, a simplicial vertex or a universal vertex to a given graph are defined in the obvious way. The operation of {\it duplicating a $2$-branch in the complement} of a graph $G$ is defined as follows. A {\it $2$-branch} in a graph $G$ is a path $(a,b,c)$ such that $d_G(b) = 2$ and $d_G(c) = 1$. We say that such a $2$-branch is {\it rooted at $a$}. {\it Duplicating a $2$-branch} $G$ results in a graph $H$ where $(a,b,c)$ is a $2$-branch in $G$, $V({H}) = V({G})\cup\{b',c'\}$, where $b'$ and $c'$ are new vertices, ${H}-\{b',c'\}= {G}$, and $(a,b',c')$ is a $2$-branch in $H$. Finally, the result of {\it duplicating a $2$-branch in the complement} of a graph $G$ is the complement of a graph obtained by duplicating a $2$-branch in $\overline{G}$.

\begin{theorem}\label{prop:operations} The class of $1$-perfectly orientable graphs is closed under each of the following operations: \begin{enumerate}
  \item Disjoint union.
  \item Adding a universal vertex (that is, join with $K_1$).
  \item Adding a true twin.
  \item Adding a simplicial vertex.
  \item Duplicating a $2$-branch in the complement.
  \item Vertex deletion.
  \item Edge contraction.
\end{enumerate} \end{theorem}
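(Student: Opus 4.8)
The unifying strategy is to start from a $1$-perfect orientation $D$ of the input graph (one exists by definition of a \po~graph, equivalently by Theorem~\ref{thm:characterization-edge-clique-covers}) and to produce a $1$-perfect orientation of the output graph, keeping in mind that an orientation is $1$-perfect exactly when every out-neighborhood induces a clique. Four of the seven operations are immediate in this language. For the \emph{disjoint union} we take the union of the two orientations, as no out-neighborhood changes. To \emph{add a universal vertex} $z$ we orient every edge incident with $z$ towards $z$; then $N^+(z)=\emptyset$, while every other out-neighborhood gains at most the vertex $z$, which, being adjacent to everything, keeps each out-neighborhood a clique. To \emph{add a true twin} $v'$ of $v$ we copy $v$'s choices (set $v'\to x$ iff $v\to x$ for $x\in N(v)$) and orient $v\to v'$; since $N[v']=N[v]$, one checks that $N^+(v')=N^+_D(v)$ and that inserting $v'$ into the out-neighborhood of $v$ or of an in-neighbor of $v$ preserves cliqueness. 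To \emph{add a simplicial vertex} $s$ we orient all its edges out of $s$, so $N^+(s)=N(s)$ is a clique and no other out-neighborhood changes. Finally, \emph{vertex deletion} only shrinks out-neighborhoods, so $D$ restricts to a $1$-perfect orientation.

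The two genuinely nontrivial operations are duplicating a $2$-branch in the complement and edge contraction. For \emph{duplicating a $2$-branch in the complement}, write $F=\overline G$ and let $(a,b,c)$ be the duplicated $2$-branch, so $N_F(b)=\{a,c\}$ and $N_F(c)=\{b\}$; the new vertices satisfy $N_{F'}(b')=\{a,c'\}$, $N_{F'}(c')=\{b'\}$, and the output is $H=\overline{F'}$. Translating to $G=\overline F$ and $H$, and setting $U=V(G)\setminus\{a,b,c\}$, one has $N_G(b)=U$, $N_G(c)=U\cup\{a\}$, while in $H$ the vertex $c'$ is adjacent to all of $V(G)$ (but not $b'$) and $b'$ is adjacent to $V(G)\setminus\{a\}$ (but not to $a$ or $c'$); moreover $bc$ and $ab$ are non-edges of $G$. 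I would keep $D$ on $G$ and extend it by: orienting $b'$'s edges to $U$ so as to mirror $b$ (set $b'\to u$ iff $b\to u$), orienting $c'$'s edges to $U$ so as to mirror $c$, orienting $a\to c'$ (so that $a$, a non-neighbor of $b$, stays out of $N^+(c')$), and orienting the four edges between $\{b,c\}$ and $\{b',c'\}$ as the directed four-cycle $b\to b'\to c\to c'\to b$. The verification that every out-neighborhood is a clique rests on the structural facts above: $c'$ is universal on $V(G)$, so it may be added to any out-neighborhood; $b'$ may be added to $N^+_D(x)$ whenever $a\notin N^+_D(x)$, which holds whenever $b\in N^+_D(x)$ since $ab$ is a non-edge; and, as $bc$ is a non-edge, no out-neighborhood of $D$ contains both $b$ and $c$, so each vertex of $U$ acquires at most one of the non-adjacent pair $b',c'$. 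The four-cycle is chosen precisely so that $N^+(b')=N^+_D(b)\cup\{c\}$ and $N^+(c')=(N^+_D(c)\cap U)\cup\{b\}$, each remaining a clique because $c$ and $b$ are adjacent to all of $U\supseteq N^+_D(b),N^+_D(c)$.

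\emph{Edge contraction} is the main obstacle. Fix $D$ with $u\to v$ (relabelling if necessary) and let $M=N_G(u)\cap N_G(v)$ be the neighborhood of the contracted vertex $w$ in $G/e$. The useful starting fact is that, since $v\in N^+_D(u)$ and $N^+_D(u)$ is a clique, $N^+_D(u)\setminus\{v\}\subseteq M$: the out-neighbors of $u$ other than $v$ are automatically common neighbors. The natural attempt is to keep $D$ off $\{u,v\}$, set $N^+(w):=N^+_D(u)\setminus\{v\}$ (a clique inside $M$), and point the remaining common neighbors into $w$. The difficulty, which is the crux of the theorem, is that a common neighbor $x$ with $x\to u$ and $v\to x$ may have a $D$-out-neighbor $y\notin M$, so that orienting $x\to w$ forces the non-adjacent pair $w,y$ into $N^+(x)$; in fact one can exhibit a contraction whose result is a $P_5$ for which \emph{no} orientation of $w$'s edges succeeds while $D$ is kept fixed off $\{u,v\}$, so edges inside $V(G)\setminus\{u,v\}$ genuinely must be re-oriented. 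The plan is therefore to argue through the edge clique cover characterization (Theorem~\ref{thm:characterization-edge-clique-covers}): build a cover $\{C'_x\}$ of $G/e$ from a cover of $G$ chosen so that the clique assigned to each common neighbor lies in $M\cup\{u,v\}$ whenever possible, covering every edge from a common neighbor to $V(G)\setminus(M\cup\{u,v\})$ from its other endpoint, and placing the remaining ``bad'' common neighbors into $C'_w$. Proving that a cover can always be chosen so that these bad common neighbors form a clique (so that $C'_w$ is indeed a clique) is the real content of the contraction case, and this is where I expect essentially all of the work to lie.
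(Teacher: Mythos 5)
Your treatments of operations 1--4 and 6 are correct and essentially identical to the paper's. Your argument for operation 5 (duplicating a $2$-branch in the complement) is also correct, but takes a genuinely different route: you construct the $1$-perfect orientation of $H$ directly (mirroring $b$ and $c$ on the new vertices over $U$, orienting $a\to c'$, and closing the four edges between $\{b,c\}$ and $\{b',c'\}$ into a directed $4$-cycle), and the three structural facts you isolate ($c'$ complete to $V(G)$, $b'$ complete to $V(G)\setminus\{a\}$, and no out-neighborhood of $D$ containing both $b$ and $c$) do make every case go through. The paper instead works in the complement via Corollary~\ref{cor:characterization-1po-complements}, extending a family of \emph{maximal} independent sets $\{I_k\}$ of $\overline{G}$ to a family $\{J_k\}$ for $\overline{H}$; your version is more direct and arguably more transparent, so this part is a legitimate alternative proof.

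The genuine gap is operation 7: you do not prove it. You outline an edge-clique-cover plan and explicitly defer its key step (``proving that a cover can always be chosen so that these bad common neighbors form a clique''), which is precisely the content of the claim; so closure under edge contraction --- and hence under induced minors, which the paper relies on later --- remains unestablished. Moreover, the obstacle you identify is an artifact of taking the contracted vertex $w$ to be adjacent only to $M=N_G(u)\cap N_G(v)$. The preliminaries of the paper do state the definition with an intersection, but this is evidently a slip: the standard definition, which the paper's own proof of this theorem and its later ``co-contraction'' arguments use, makes $w$ adjacent to all of $(N_G(u)\cup N_G(v))\setminus\{u,v\}$. Under that definition, no re-orientation of edges inside $V(G)\setminus\{u,v\}$ is needed, contrary to your claim: keep $D$ there, set $N^+(w):=N_D^+(v)$ (the out-neighborhood of the \emph{head} of the contracted arc $u\to v$), and orient every other edge at $w$ into $w$. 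The verification rests on two facts: (i) $N_D^+(v)$ is a clique avoiding $u$ and $v$, hence still a clique in $G/e$; and (ii) every vertex $x$ forced to point into $w$ has $u$ or $v$ in $N_D^+(x)$ --- for $x\in N_G(u)\setminus N_G(v)$ the arc $x\to u$ is forced, since $u\to x$ would put the non-adjacent pair $x,v$ inside the clique $N_D^+(u)$ --- so $N_D^+(x)$ lies inside $N_G[u]$ or $N_G[v]$, and swapping $u$ and/or $v$ for $w$ keeps it a clique because every vertex adjacent to $u$ or to $v$ in $G$ is adjacent to $w$ in $G/e$. Note also that the choice of the head matters: your ``natural attempt'' $N^+(w):=N_D^+(u)\setminus\{v\}$ fails even under the standard definition, because a vertex $x\in N_G(v)\setminus N_G(u)$ with $v\to x$ in $D$ would then be forced to point into $w$ although its out-neighbors need not be adjacent to $w$.
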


\begin{proof} For a \po~graph $G$, let us denote by $D(G)$ an arbitrary (but fixed) $1$-perfect orientation of $G$.

\medskip \noindent 1. If $G=G_1 +G_2$ is the disjoint union of two \po~graphs $G_1$ and $G_2$, then the disjoint union of $D(G_1)$ and $D(G_2)$ is a $1$-perfect orientation of $G$.

\medskip \noindent 2. Suppose we have a \po~graph $G$ with orientation $D(G)$ and we add a universal vertex $v$ to $G$, thus obtaining a graph $G'$. A $1$-perfect orientation $D'$ of $G'$ can be obtained by orienting an edge $xy\in E(G)$ from $x$ to $y$ if the edge is oriented from $x$ to $y$ in $D(G)$, and orienting the edges of the form $uv$ from $u$ to $v$. It is easy to see that $D'$ is indeed $1$-perfect.

\medskip \noindent 3. Let $w$ be a vertex in a \po~graph $G$, and let $G'$ be the graph obtained from $G$ by adding to it a true twin of $w$, say $v$. We obtain a $1$-perfect orientation $D'$ of $G'$ by maintaining the same orientation as in $D(G)$ for the edges in $G$ and orienting the new edges (incident with $v$) as $v\rightarrow{u}$ if $u \in N_{D(G)}^{+}(w)$, and $u\rightarrow{v}$ if $u \in N_{D(G)}^{-}(w)$. We also orient the edge between $w$ and $v$ as $w\rightarrow{v}$. It is a matter of routine verification to check that the so obtained orientation of $G'$ is $1$-perfect.

\medskip \noindent 4. If we add a simplicial vertex $v$ to a \po~graph $G$, then extending $D(G)$ by orienting all edges incident with $v$ away from $v$ results in an orientation $D'$ of the new graph, say $G'$, such that $N_{D'}^{+}(v)$ is a clique in $G'$. The other out-neighborhoods were not changed, so they are cliques in $G'$ as well.

\medskip \noindent 5. Let $V(G) = \{v_1,\ldots, v_n\}$. If $G$ is \po, then Corollary~\ref{cor:characterization-1po-complements} applies to $\overline{G}$. Hence, $\overline G$ has a collection of independent sets $\{I_1,\ldots, I_n\}$ such that $v_i\in I_i$ for all $i$, and for every edge $v_iv_j\in E(G)$, we have $v_i\in I_j$ or
 $v_j\in I_i$. Let $H$ be the graph resulting from duplicating a $2$-branch
 $(a,b,c)$ in $\overline{G}$; without loss of generality, we may assume that
 $(a,b,c) = (v_1,v_2,v_3)$; furthermore, let the two new vertices $b'$ and $c'$ be labeled as
 $v_{n+1}$ and $v_{n+2}$, respectively.
 It suffices to prove that
 $H$ has a collection of independent sets $\{J_1,\ldots, J_{n+2}\}$
such that $v_k\in J_k$ for all $k$, and for every edge $v_iv_k\in E(\overline{H})$, we have $v_i\in J_k$ or
 $v_k\in J_i$.
 We may assume without loss of generality that the sets $I_j$ are maximal independent sets in $\overline{G}$,
 which in particular implies that each $I_j$ contains exactly one of the vertices $b$ and $c$.
 We define the sets $J_k$ for $k\in \{1,\ldots, n+2\}$ with the following rule:
 \begin{itemize}
   \item For all $v_k\in V(G)$, set $$J_k = \left\{
                                                      \begin{array}{ll}
                                                        I_k\cup\{b'\}, & \hbox{if $b\in I_k$;} \\
                                                        I_k\cup \{c'\}, & \hbox{if $c\in I_k$.}
                                                      \end{array}
                                                    \right.$$
   \item For $k = n+1$ (that is, $v_k = b'$), set $J_k = (I_2\setminus \{b\})\cup\{b',c\}$.
   \item For $k = n+2$ (that is, $v_k = c'$), set $J_k = (I_3\setminus \{a,c\})\cup\{b,c'\}$.
 \end{itemize}
Clearly, each $J_k$ is an independent set in $H$. Let $v_iv_k\in E(\overline{H})$. Since $b'c'\not\in E(\overline{H})$, we may assume that $v_i\in V(G)$. We analyze three cases according to where is $v_k$.

If $v_k\in V(G)$, then $v_iv_k\in E(G)$ and hence $v_i\in I_k$ or $v_k\in I_i$, implying $v_i\in J_k$ or $v_k\in J_i$.

If $v_k= b'$, then either $v_i\in J_k$ (in which case we are done), or $v_i\not \in J_k = (I_2\setminus \{b\})\cup\{b',c\}$, in which case either $v_i = b$ or $v_i\not\in I_2$. In the former case, we have $i = 2$ and $v_k = b'\in J_2$, while in the latter case, we have $b = v_2\in I_i$, which implies $v_k = b'\in J_i$.

If $v_k= c'$, then either $v_i\in J_k$ (in which case we are done), or $v_i\not \in J_k = (I_3\setminus \{a,c\})\cup\{b,c'\}$, in which case either $v_i \in \{a,c\}$ or $v_i\not\in I_3$. In the former case, we have $c\in I_i$ (if $v_i = a$ this follows from the maximality of $I_i$) and consequently $v_k = c'\in J_i$. In the latter case, we have $c = v_3\in I_i$, which implies $v_k = c'\in J_i$.

We have shown that $v_k\in J_k$ for all $k$, and for every edge $v_iv_k\in E(\overline{H})$, we have $v_i\in J_k$ or
 $v_k\in J_i$. By Corollary~\ref{cor:characterization-1po-complements}, $H$ is the complement of a \po~graph,
which establishes item 5.

\medskip \noindent 6. Closure under vertex deletions follows immediately from the fact that the class of complete graphs is closed under vertex deletions.

\medskip \noindent 7. Let $e = uv$ be an edge of a \po~graph $G$, and let $D$ be a $1$-perfect orientation of $G$, with (without loss of generality) $u\rightarrow v$. Let $G'=G/e$ be the graph obtained by contracting the edge $e$, and let $w$ be the vertex replacing $u$ and~$v$.

Set 
\begin{eqnarray*}
 X &=& N_G(u)\setminus N_G(v)\,,\\
  Y &=& \{x\in N_G(u)\cap N_G(v)\mid (x,v) \in A(D)\}\,,\\
  U &=& \{x\in N_G(u)\cap N_G(v)\mid (v,x) \in A(D)\}\,,\\
  W &=& \{x\in N_G(v)\setminus N_G(u)\mid (x,v) \in A(D)\}\,,\\
  Z &=& \{x\in N_G(v)\setminus N_G(u)\mid (v,x) \in A(D)\}\,,\\
R &=& V(G)\setminus(X\cup Y\cup U \cup W\cup Z\cup \{u,v\})\,.
\end{eqnarray*}

Let $D'$ be an orientation of $G'$ defined as follows: \begin{enumerate}[(i)]
  \item For all edges $e \in E(G')$ whose endpoints are not incident with $w$, orient $e$ the same way as it is oriented in $D$.
  \item For all $x \in  X$, orient the edge $xw$ as $x \rightarrow{w}$.
  \item For all $x \in Y$, orient the edge $xw$ as $x \rightarrow{w}$.
  \item For all $x \in U$, orient the edge $xw$ as $w \rightarrow{x}$.
  \item For all $x \in W$, orient the edge $xw$ as $x \rightarrow{w}$.
  \item For all $x \in Z$, orient the edge $xw$ as $w \rightarrow{x}$.
\end{enumerate}

We complete the proof by showing that $D'$ is a $1$-perfect orientation of $G'$. We do this by directly verifying the defining condition that for every vertex $x$ of $V(G')$, the set $N_{D'}^+(x)$ is a clique in $G'$. Note that $X\cup Y\cup U\cup W\cup Z\cup \{w\}\cup R$ is a partition of $V(G')$. We consider seven cases depending on to which part of this partition $x$ belongs.

\medskip \noindent (1) $x \in X$. In this case, $N_{D'}^{+}(x)=(N_{D}^{+}(x)\setminus \{u\}) \cup \{w\}$. Note that since $(u,v)\in A(D)$ and $D$ is a $1$-perfect orientation of $G$, we have $u\in N^+_D(x)$. Consequently, since $N^+_D(x)$ is a clique in $G$ containing $u$, it contains no vertex from $R\cup Z$, and thus $N_{D'}^{+}(x)=(N_{D}^{+}(x)\setminus \{u\}) \cup \{w\}$ is a clique in $G'$.

\medskip \noindent (2) $x \in W$. In this case, $v \in N_{D}^{+}(x)$, and a similar reasoning as above shows that $N_{D'}^{+}(x)=(N_{D}^{+}(x) \setminus \{v\}) \cup \{w\}$ is a clique in $G'$.

\medskip \noindent (3) $x \in Z$. In this case, $N_{D'}^{+}(x)=N_{D}^{+}(x)$ and this set is a clique in $G$ and hence in $G'$.

\medskip \noindent (4) $x \in Y$. In this case, we have two possibilities, either $u \in N_{D}^{+}(x)$ or not. In the former case, we have $N_{D'}^{+}(x)=(N_{D}^{+}(x) \setminus \{u,v\}) \cup \{w\}$ which is a clique in $G'$, since $N_{D}^{+}(x) $ is a clique in $G$ containing $u$ and $v$, and every neighbor of $w$ in $G'$ is a neighbor of either $u$ or of $v$ in $G$. In the latter case, we have $N_{D'}^{+}(x)=(N_{D}^{+}(x) \setminus \{v\}) \cup \{w\}$, which is again a clique in $G'$ by a similar argument.

\medskip \noindent (5) $x\in U$.
 Now, $N_{D'}^{+}(x)=N_{D}^{+}(x) \setminus \{u\}$, which is a clique in $G$ not containing $u$ or $v$, and hence a clique in $G'$.

\medskip \noindent (6) $x \in R$. Since the edges with endpoints in $R$ have no endpoint in $\{u,v\}$, the edges which have $x$ as an endpoint will maintain the same orientation as in $D$. Therefore, $N_{D'}^{+}(x)=N_{D}^{+}(x)$ is a clique in $G'$.

\medskip \noindent (7) $x=w$. In this case, we have $N_{D'}^{+}(x)= N_{D}^{+}(v)$, therefore $N_{D'}^{+}(x)$ forms a clique in $G'$.
\end{proof}

In the study of \po~graphs we may restrict our attention to connected graphs. It is a natural question whether we may also assume that $G$ is co-connected, that is, that its complement is connected, or, equivalently, that $G$ is not the join of two smaller graphs. The join operation does not generally preserve the class of \po~graphs: the graphs $2K_1$ and $3K_1$ are trivially \po, but their join, $K_{2,3}$, is not (as can be easily verified; see also Theorem~\ref{thm:non-1-po}). In the next theorem we characterize when the join of two graphs is $1$-p.o. A graph is said to be {\it co-bipartite} of its complement is bipartite.

\begin{theorem}\label{prop:co-bipartite-join}
Suppose that a graph $G$ is the join of two graphs $G_1$ and $G_2$. Then, $G$ is $1$-perfectly orientable if and only if one of the following conditions hold: \begin{enumerate}
  \item $G_1$ is complete and $G_2$ is \po, or vice versa.
  \item Each of $G_1$ and $G_2$ is a co-bipartite \po~graph.
\end{enumerate} In particular, the class of co-bipartite \po~graphs is closed under join. \end{theorem}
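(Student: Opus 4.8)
The plan is to argue directly from the defining property: an orientation $D$ of $G$ is $1$-perfect precisely when $N_D^+(x)$ is a clique for every vertex $x$. The structural feature I would exploit at the outset is that in a join $G=G_1\ast G_2$ every vertex of $G_1$ is adjacent to every vertex of $G_2$, so all edges between $N_D^+(x)\cap V(G_1)$ and $N_D^+(x)\cap V(G_2)$ are automatically present. Hence $N_D^+(x)$ is a clique of $G$ if and only if $N_D^+(x)\cap V(G_1)$ is a clique of $G_1$ and $N_D^+(x)\cap V(G_2)$ is a clique of $G_2$. Both directions rest on this observation. The final ``in particular'' claim then follows once the equivalence is proved: using the identity $\overline{G_1\ast G_2}=\overline{G_1}+\overline{G_2}$ together with the fact that a disjoint union of bipartite graphs is bipartite, the join of two co-bipartite graphs is co-bipartite, and condition~$2$ makes it $1$-perfectly orientable.

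For necessity, assume $G$ is $1$-perfectly orientable with a $1$-perfect orientation $D$. The core of this direction is the claim that if $G_2$ is not complete, then $G_1$ is co-bipartite. To see it, I fix non-adjacent $a,b\in V(G_2)$ and consider, for each $v\in V(G_1)$, the two present edges $va$ and $vb$; orienting both away from $v$ is impossible, as it would place the non-adjacent pair $\{a,b\}$ in $N_D^+(v)$, so every $v$ receives an arc from $a$ or from $b$. Thus $V(G_1)=\bigl(N_D^+(a)\cap V(G_1)\bigr)\cup\bigl(N_D^+(b)\cap V(G_1)\bigr)$, a union of two cliques of $G_1$, so $G_1$ is co-bipartite. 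By symmetry, if $G_1$ is not complete then $G_2$ is co-bipartite. Splitting into cases: if one factor is complete we are in condition~$1$, the other factor being $1$-perfectly orientable as an induced subgraph of $G$ (closure under vertex deletion, item~6 of Theorem~\ref{prop:operations}); if neither is complete, the claim applied both ways gives condition~$2$, each factor again being $1$-perfectly orientable as an induced subgraph.

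For sufficiency I treat the two conditions separately. Condition~$1$ is easy: writing $G_1=K_m$, the graph $G=G_2\ast K_m$ arises from the \po~graph $G_2$ by successively adding $m$ universal vertices, so closure under adding a universal vertex (item~2 of Theorem~\ref{prop:operations}) finishes it. Condition~$2$ carries the real construction. I fix $1$-perfect orientations $D_1$ of $G_1$ and $D_2$ of $G_2$, retain them on the edges internal to each factor, and partition $V(G_1)=A_1\cup B_1$ and $V(G_2)=A_2\cup B_2$ into two cliques each (a genuine partition, available since $\overline{G_1},\overline{G_2}$ are bipartite). I then orient the cross edges cyclically, $A_1\to A_2\to B_1\to B_2\to A_1$: every edge between $A_1$ and $A_2$ points into $A_2$, every edge between $A_2$ and $B_1$ points into $B_1$, and so on. A block-by-block check shows the out-neighbors in the opposite factor of any vertex lie in a single block—for instance a vertex of $A_1$ sends cross-arcs only to $A_2$, and a vertex of $A_2$ sends cross-arcs only to $B_1$—so by the opening observation each full out-neighborhood is a clique and $G$ is $1$-perfectly orientable.

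I expect the main obstacle to be the sufficiency of condition~$2$: orienting the cross edges so that every out-neighborhood stays a clique while respecting the fixed internal orientations $D_1,D_2$. The cyclic pattern $A_1\to A_2\to B_1\to B_2\to A_1$ is the decisive idea, since it confines each vertex's out-neighbors on the opposite side to a single clique of that side's bipartition, exactly matching what the opening observation requires. The remaining work—the four-block verification and the routine argument for condition~$1$—is then mechanical.
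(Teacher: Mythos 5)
Your proof is correct, and your sufficiency direction coincides with the paper's: the same cyclic orientation of the cross edges ($A_1\to A_2\to B_1\to B_2\to A_1$ on top of fixed internal orientations $D_1,D_2$) for condition~2, and the same universal-vertex argument via Theorem~\ref{prop:operations} for condition~1. Where you genuinely depart from the paper is in the necessity direction. The paper argues through its forbidden-structure machinery: if neither factor is complete and, say, $G_1$ is not co-bipartite, then $\overline{G_1}$ contains an induced odd cycle, so $G$ contains the join of $\overline{C_{2k+1}}$ with $2K_1$, which is $\overline{C_{2k+1}+K_2}$, a member of the family ${\cal F}_4$ shown not to be $1$-perfectly orientable in Theorem~\ref{thm:non-1-po} (a forward reference, since that theorem is proved later via Lemma~\ref{lem:cycles}). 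Your argument is instead direct and local: for a non-adjacent pair $a,b\in V(G_2)$, every $v\in V(G_1)$ must receive an arc from $a$ or from $b$ (otherwise $\{a,b\}\subseteq N_D^+(v)$, contradicting that out-neighborhoods are cliques), so $V(G_1)$ is covered by the two cliques $N_D^+(a)\cap V(G_1)$ and $N_D^+(b)\cap V(G_1)$, whence $G_1$ is co-bipartite; symmetry and closure under vertex deletion finish the case analysis. Your route buys self-containedness — it needs nothing beyond the definition of a $1$-perfect orientation and item~6 of Theorem~\ref{prop:operations}, and it removes the forward dependency — while the paper's route, given that Theorem~\ref{thm:non-1-po} is proved anyway, gets necessity almost for free and makes explicit which minimal obstruction ($\overline{C_{2k+1}+K_2}$) witnesses the failure when both conditions are violated.
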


\begin{proof} Suppose first that $G$ is $1$-p.o. Clearly, both $G_1$ and $G_2$ are \po~graphs. If one of $G_1$ or $G_2$ is complete or both are co-bipartite, we are done. So suppose that neither of them is complete and $G_1$, say, is not co-bipartite. Then, $G_1$ contains the complement of an odd cycle, $\overline{C_{2k+1}}$ for some $k\ge 1$, as induced subgraph. Since $G_2$ is not complete, it contains $2K_1$ as induced subgraph. Consequently, $G$ contains the join of $\overline{C_{2k+1}}$ and $2K_1$ as induced subgraph. As this graph is isomorphic to the complement of $C_{2k+1}+K_2$, it is not \po~(see Theorem~\ref{thm:non-1-po}), and hence neither is $G$, a contradiction.

For the converse direction, suppose first that $G_1$ is complete and $G_2$ is \po, or vice versa. In this case $G$ is~\po, since it can be obtained from a \po~graph by a sequence of universal vertex additions, and Theorem~\ref{prop:operations} applies. Suppose now that $G_1$ and $G_2$ are two co-bipartite \po~graphs with bipartitions of their respective vertex sets into cliques $\{A_1, B_1\}$ and $\{A_2,B_2\}$, respectively (one of the two cliques in each graph can be empty). Fixing a $1$-perfect orientation $D_i$ of each $G_i$ (for $i = 1,2$), we can construct a $1$-perfect orientation, say $D$,  of $G = G_1\ast G_2$, as follows. Every edge of $G$ that is an edge of some $G_i$ is oriented as in $D_i$. Orient the remaining edges of the join from $A_1$ to $A_2$, from $B_1$ to $B_2$, from $A_2$ to $B_1$ and from $B_2$ to $A_1$.  Let us verify that the out-neighborhood of a vertex $x\in A_1$ with respect to $D$ forms a clique in $G$ (the other cases follow by symmetry). We have $N_D^+(x) = N_{D_1}^+(x)\cup A_2$, and since $N_{D_1}^+(x)$ is a clique in $G_1$, $A_2$ is a clique in $G$ and there are all edges between $G_1$ and $A_2$, the set $N_D^+(x)$ is indeed a clique in $G$. This shows that $G$ is \po{}

Since the class of bipartite graphs is closed under disjoint union, the class of co-bipartite graphs is closed under join. Consequently, the set of co-bipartite \po~graphs is closed under join. \end{proof}

\section{$1$-perfectly orientable co-bipartite graphs}\label{sec:co-bipartite-graphs}

The behavior of \po~graphs under the join operation motivates the study of \po~co-bipartite graphs. In this section we show that a co-bipartite graph is \po~if and only if it is circular arc. A graph is {\it circular arc} if it is the intersection graph of a set of closed arcs on a circle. The class of circular arc graphs forms an important and well studied subclass of \po~graphs; see, e.g.,~\cite{MR3159129, MR2567965}. 
The equivalence of the classes of \po~graphs and circular arc graphs within co-bipartite graphs will be derived using two ingredients: a necessary condition for the \po~property, which holds in general, and a characterization of co-bipartite circular arc graphs due to Hell and Huang.

We say that a chordless cycle $C$ in a graph $G$ is oriented {\it cyclically} in an orientation $D$ of $G$ if every vertex of the cycle has exactly one out-neighbor on the cycle (see~\cite{MR2370526,MR2643278} for results on orientations defined in terms of this property).

\begin{lemma}\label{lem:cycles} In every $1$-perfect orientation $D$ of a \po~graph $G$, every chordless cycle of length at least four is oriented cyclically. \end{lemma}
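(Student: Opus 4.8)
The plan is to combine a purely local constraint coming from the tournament condition with a global counting argument over the cycle. Fix a chordless cycle $C = (x_0, x_1, \ldots, x_{k-1}, x_0)$ with $k \geq 4$, indices read modulo $k$. Since $C$ is chordless, the only neighbors of $x_i$ on $C$ are $x_{i-1}$ and $x_{i+1}$, so in the orientation $D$ each vertex of $C$ has either zero, one, or two out-neighbors lying on $C$. The goal is to force exactly one at every vertex simultaneously, which is precisely the definition of being oriented cyclically.

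First I would establish the local upper bound: no vertex of $C$ can have two out-neighbors on $C$. Because $k \geq 4$ and $C$ is chordless, the vertices $x_{i-1}$ and $x_{i+1}$ are distinct and \emph{non-adjacent} in $G$ (they lie at distance two along $C$). If both of them belonged to $N_D^+(x_i)$, then $N_D^+(x_i)$ would contain a non-adjacent pair, contradicting the hypothesis that $D$ is $1$-perfect, i.e.\ that $N_D^+(x_i)$ induces a tournament. Hence each $x_i$ has at most one out-neighbor on $C$.

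Then I would close the argument by a double count. Each of the $k$ edges of $C$ is oriented in exactly one direction in $D$, so the total number of arcs of $D$ with both endpoints on $C$ equals $k$; equivalently, the sum over all $i$ of the number of out-neighbors of $x_i$ lying on $C$ equals $k$. As there are $k$ vertices and each contributes at most $1$ by the previous step, every vertex must contribute exactly $1$. Thus every vertex of $C$ has exactly one out-neighbor on $C$, which is what the lemma asserts.

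I do not expect a genuine obstacle here: the only point that requires care is the observation that chordlessness together with $k \geq 4$ makes $x_{i-1}$ and $x_{i+1}$ a non-adjacent pair — this is exactly where the length hypothesis is used, since it fails for triangles. Once this is in place, the tournament condition gives the local ``at most one'' bound and the one-line edge count upgrades it to ``exactly one'' at every vertex.
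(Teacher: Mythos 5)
Your proof is correct and takes essentially the same approach as the paper's: both rest on the observation that chordlessness together with length at least four makes the two cycle-neighbors of any vertex non-adjacent (so the tournament condition forbids out-degree $2$ on the cycle), combined with the count that the out-degrees along the cycle sum to its length. The only difference is presentational --- you argue directly (at most one out-neighbor, then the count forces exactly one), while the paper phrases it as a contradiction --- but the ingredients are identical.
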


\begin{proof} Suppose that a chordless cycle $C$ in $G$ is not oriented cyclically in some $1$-perfect orientation $D$ of $G$. Let $C'$ be the orientation of $C$ induced by $D$. By assumption, $C$ contains a vertex $v$ with $d^+_{C'}(v) \neq 1$. Since $\sum_{u\in V(C)}d^+_{C'}(u) = |A(C')| = |E(C)| = |V(C)|$, it is not possible that $d^+_{C'}(u) \le 1$ for all $u\in V(C)$, as this would imply $d^+_{C'}(v) = 0$ and consequently $\sum_{u\in V(C)}d^+_{C'}(u) <|V(C)|$. Thus, $C$ contains a vertex $v$ with $d^+_{C'}(v) = 2$. Since $C$ is of length at least $4$ and chordless, the out-neighborhood of $v$ in $C'$, and hence in $D$, is not a clique in $G$, contradicting the fact that $D$ is a $1$-perfect orientation of $G$. \end{proof}

We now describe the characterization of co-bipartite circular arc graphs due to Hell and Huang. Let $G$ be a co-bipartite graph with a bipartition $\{U, U'\}$ of its vertex set into two cliques. An edge of $G$ connecting a vertex from $U$ with a vertex of $U'$ is said to be a \emph{crossing edge} of $G$. A coloring of the crossing edges of $G$ with colors red and blue is said to be \emph{good} (with respect to $\{U, U'\}$) if for every induced $4$-cycle in $G$, the two crossing edges in it are of the opposite color. The following characterization of co-bipartite circular arc graphs is a reformulation of~\cite[Corollary 2.3]{MR1435657}.

\begin{theorem}\label{thm:co-bip-circ-arc} Let $G$ be a co-bipartite graph with a bipartition $\{U, U'\}$ of its vertex set into two cliques. Then $G$ is a circular arc graph if and only if it has a good coloring with respect to $\{U, U'\}$. \end{theorem}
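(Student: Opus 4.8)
Theorem~\ref{thm:co-bip-circ-arc} is stated as a reformulation of \cite[Corollary 2.3]{MR1435657}, so the most economical proof is to recall the exact condition appearing there and verify that it is a verbatim translation of the existence of a good coloring; I will instead describe how I would establish the equivalence directly, and point to where the Hell--Huang argument does the real work. Throughout, fix the bipartition $\{U,U'\}$ of $V(G)$ into cliques and let $B$ be the bipartite ``crossing graph'' with parts $U$ and $U'$ whose edges are exactly the crossing edges of $G$; its non-edges are the crossing non-edges of $G$. The first thing I would record is a dictionary between the two settings: an induced $4$-cycle of $G$ consists of two vertices $x_1,x_2\in U$ and two vertices $x_3,x_4\in U'$ whose only crossing edges are (say) $x_1x_4$ and $x_2x_3$, and this is precisely an induced $2K_2$ of $B$. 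Hence a good coloring is exactly a $2$-coloring of $E(B)$ in which no induced $2K_2$ of $B$ is monochromatic.

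For the implication ``circular arc $\Rightarrow$ good coloring'', I would start from a circular arc model and read off the coloring geometrically. Because $U$ and $U'$ are cliques, the $U$-arcs pairwise intersect and the $U'$-arcs pairwise intersect; a crossing edge $uu'$ then corresponds to a nonempty overlap $A_u\cap A_{u'}$, and I would color $uu'$ according to which of two canonical arcs of the circle this overlap occupies. The content is that for an induced $4$-cycle as above the two overlaps $A_{x_1}\cap A_{x_4}$ and $A_{x_2}\cap A_{x_3}$ are forced onto opposite sides, because $A_{x_1}$ misses $A_{x_3}$ while $A_{x_2}$ misses $A_{x_4}$; the two crossing edges therefore receive distinct colors. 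Making ``which side'' well defined requires first normalizing the model, and this, together with the case in which a clique's arcs cover the whole circle, is the only fussy point in this direction.

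For the converse I would try to build a model from a good coloring, and this is where the main difficulty lies. The tempting construction is to route all $U$-arcs through one point $p$ and all $U'$-arcs through another point $q$, so that $p,q$ split the circle into a ``red region'' and a ``blue region'', and then to grow each arc into each region just far enough to realize the red crossing edges in the red region and the blue ones in the blue region. Within a single region this is a threshold (anchored-interval) problem, solvable precisely when the corresponding color class is $2K_2$-free as an abstract bipartite graph. The catch is that a good coloring only forbids \emph{monochromatic induced} $2K_2$'s of $B$; a color class may still contain a $2K_2$ whose missing edges carry the other color, and a small example (such as $K_4$ minus an edge) shows that such a color class need not be realizable by single-point-anchored arcs at all. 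Thus the naive region-splitting model is too rigid.

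Consequently, I expect the real work of the converse to consist in using the full flexibility of circular arc models---allowing arcs to wrap around and not insisting on common anchor points---and in organizing the resulting choices into a single globally consistent assignment. This is exactly the step that Hell and Huang package in the analysis underlying \cite[Corollary 2.3]{MR1435657}; the cleanest route is therefore to invoke their result and merely confirm the translation recorded in the first paragraph, with separate routine attention to the degenerate cases in which $U$ or $U'$ is empty or its arcs cover the circle.
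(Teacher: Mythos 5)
Your proposal is correct and takes essentially the same approach as the paper: the paper gives no independent proof of Theorem~\ref{thm:co-bip-circ-arc}, presenting it purely as a reformulation of Corollary 2.3 of Hell and Huang~\cite{MR1435657}, which is exactly what your final paragraph settles on (invoke the cited result and verify the dictionary between induced $4$-cycles of $G$ and induced $2K_2$'s of the crossing bipartite graph, which you state correctly). Your honest analysis of why a naive direct construction of the converse fails is accurate but ultimately unnecessary, since both you and the paper delegate the real work to the citation.
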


\begin{theorem}\label{thm:co-bipartite-circular-arc} For every co-bipartite $G$ graph, the following properties are equivalent: \begin{enumerate}
 \item $G$ is $1$-perfectly orientable. 
 \item $G$ has an orientation in which every induced $4$-cycle is oriented cyclically. 
 \item $G$ is circular arc. \end{enumerate} \end{theorem}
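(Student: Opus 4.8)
The plan is to establish all three equivalences through the cycle of implications $1 \Rightarrow 2 \Rightarrow 3 \Rightarrow 1$, so that each step can reuse a single tool. The implication $1 \Rightarrow 2$ is immediate from Lemma~\ref{lem:cycles}: every induced $4$-cycle is in particular a chordless cycle of length at least four, so in any $1$-perfect orientation of $G$ it is oriented cyclically. The implication $3 \Rightarrow 1$ is also immediate, since every circular arc graph is \po, as recalled in the introduction. Thus the entire content of the theorem is concentrated in the implication $2 \Rightarrow 3$, which I expect to be the main obstacle.

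For $2 \Rightarrow 3$, I would fix a bipartition $\{U, U'\}$ of $V(G)$ into two cliques and an orientation $D$ of $G$ in which every induced $4$-cycle is oriented cyclically. The idea is to read off a good coloring of the crossing edges directly from $D$: color a crossing edge $uu'$ with $u \in U$ and $u' \in U'$ \emph{red} if it is oriented $u \rightarrow u'$ and \emph{blue} if it is oriented $u' \rightarrow u$. To show that this coloring is good, I would first record the structure of an induced $4$-cycle in a co-bipartite graph. Since $U$ and $U'$ are cliques, the two diagonals (non-edges) of such a cycle must each join $U$ to $U'$; a short case analysis then shows that the cycle has exactly two vertices in each clique, two edges lying inside the cliques, and exactly two crossing edges.

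The crux is then to check that the two crossing edges of each induced $4$-cycle receive opposite colors. Writing the cycle as $a, b, c, d$ and identifying which two of its four edges are the crossing ones, the hypothesis that $D$ orients the cycle cyclically leaves only the two genuinely cyclic orientations of it; in either one, tracing the arc directions shows that exactly one crossing edge is oriented from $U$ to $U'$ and the other from $U'$ to $U$, so one is red and the other is blue. This is precisely the goodness condition, so Theorem~\ref{thm:co-bip-circ-arc} (Hell--Huang) yields that $G$ is circular arc, completing the cycle of implications. The only delicate point is bookkeeping the clique-membership patterns of the four vertices together with the two cyclic orientations consistently; once the structural description of induced $4$-cycles is in place, the verification reduces to a finite case check.
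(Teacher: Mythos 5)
Your proposal is correct and follows essentially the same route as the paper: the cycle $1\Rightarrow 2$ via Lemma~\ref{lem:cycles}, $3\Rightarrow 1$ via the known fact that circular arc graphs are \po, and $2\Rightarrow 3$ by reading a red/blue coloring of the crossing edges off the orientation $D$ and invoking the Hell--Huang characterization (Theorem~\ref{thm:co-bip-circ-arc}). The only difference is that you spell out the structure of induced $4$-cycles in a co-bipartite graph (two vertices per clique, exactly two crossing edges), which the paper leaves implicit; this is a harmless, and indeed careful, addition.
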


\begin{proof} 
As shown by Skrien~\cite{MR666799}, implication $3\Rightarrow 1$ holds for general (not necessarily co-bipartite) graphs. 
Similarly, the implication $1\Rightarrow 2$ holds in general as follows from Lemma~\ref{lem:cycles}.

It remains to prove that if $G$ is co-bipartite, then condition 2 implies condition 3. Let $D$ be an orientation of $G$ in which every induced $4$-cycle of $G$ is oriented cyclically. Fix a partition $\{U,U'\}$ of $V(G)$ into two cliques. We will now show that $G$ admits a good coloring (with respect to $\{U,U'\}$), and Theorem~\ref{thm:co-bip-circ-arc} will imply that $G$ is circular arc. We obtain a good coloring of $G$ as follows: for every crossing edge $e$ of $G$, we color $e$ red if the arc of $D$ corresponding to $e$ goes from $U$ to $U'$, and blue if it goes from $U'$ to $U$. To see that this is indeed a good coloring, let $C$ be an arbitrary induced $4$-cycle of $G$. Since $C$ is oriented cyclically in $D$, out of the two crossing edges of $C$ exactly one is oriented from $U$ to $U'$ in $D$. This implies that the two crossing edges of $C$ will have different colors in the above coloring. It follows that the obtained coloring is a good coloring, as claimed. \end{proof}

Many characterizations of circular arc co-bipartite graphs are known, including a characterization in terms of forbidden induced subgraphs due to Trotter and Moore~\cite{MR0450140} and several (at least five) others, see, e.g.,~\cite{MR2567965,MR3159129}. By Theorem~\ref{thm:co-bipartite-circular-arc}, each of these yields a characterization of \po~co-bipartite graphs. Theorem~\ref{thm:co-bipartite-circular-arc} can also be seen as providing further characterizations of co-bipartite circular arc graphs.

\section{A family of minimal forbidden induced minors}\label{sec:minors}

Theorem~\ref{prop:operations} implies that the class of \po~graphs is closed under vertex deletions and edge contractions. Hence, it is also closed under taking induced minors. Recall that a graph $H$ is said to be an {\it induced minor} of a graph $G$ if $H$ can be obtained from $G$ by a series of vertex deletions or edge contractions. Graph classes closed under induced minors include all the minor-closed graph classes, as well as many others (see, e.g.,~\cite{MR1415290,MR2901082, MR1360111, MR1109419, MR0276129}). Since the class of \po~graphs is closed under induced minors, it can be characterized in terms of {\it minimal forbidden induced minors}.
That is, there exists a unique minimal set of graphs $\tilde{\cal F}$ such that (i) a graph $G$ is \po~if and only if $G$ is {\it $\tilde{\cal F}$-induced-minor-free} (that is, no induced minor of $G$ is isomorphic to a member of $\tilde{\cal F}$), and (ii) every proper induced minor of every graph in $\tilde{\cal F}$ is \po{} In this section we identify an infinite subfamily ${\cal F}\subseteq \tilde{\cal F}$ of minimal forbidden induced minors for the class of \po~graphs.

\begin{sloppypar} We start with two preliminary observations. The fact that every circular arc graph \hbox{is~\po} implies the following. \end{sloppypar}

\begin{proposition}\label{prop:powers-of-cycles} The complement of every odd cycle is~$1$-perfectly orientable.\end{proposition}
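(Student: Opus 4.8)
The plan is to show that the complement $\overline{C_n}$ of an odd cycle $C_n$ (with $n = 2k+1$) is a circular arc graph; the proposition then follows at once from the fact, recalled in the introduction, that every circular arc graph is $1$-perfectly orientable. For $n = 3$ the graph $\overline{C_3}$ is edgeless and the claim is trivial, so I would assume $n \ge 5$.

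To build a circular arc representation, I would place $n$ closed arcs of a common length $L$ on a circle of unit circumference, with the arc $A_i$ centered at the point $i/n$ for $i \in \{0, 1, \ldots, n-1\}$. For two such equally spaced arcs, $A_i$ and $A_j$ meet if and only if the circular distance between their centers, namely $\|(i-j)/n\|$ (the distance to the nearest integer), is at most $L$. Since the attainable center distances are exactly $1/n, 2/n, \ldots, k/n$, choosing $L$ strictly between $(k-1)/n$ and $k/n$ --- for concreteness $L = (2k-1)/(2n)$, which is not a multiple of $1/n$, so that no boundary cases arise --- makes $A_i$ and $A_j$ disjoint precisely when their center distance equals the maximal value $k/n$, that is, precisely when $i - j \equiv \pm k \pmod n$.

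It follows that the graph recording the disjoint pairs of arcs is the circulant graph on $\mathbb{Z}_n$ in which $i$ is joined to $i \pm k$. This graph is $2$-regular, and it is connected because $\gcd(k, n) = \gcd(k, 2k+1) = 1$; hence it is a single cycle $C_n$. Consequently the intersection graph of the arcs $A_0, \ldots, A_{n-1}$ is exactly the complement of this cycle, which is isomorphic to $\overline{C_n}$. This exhibits $\overline{C_n}$ as a circular arc graph and completes the argument.

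The only genuinely computational step is the determination of the intersection pattern of the equally spaced arcs: one must verify the elementary fact about when two equal-length arcs on a circle meet, list the attainable center distances, and pin down $L$ so that exactly the offset-$k$ pairs are separated. The remaining ingredients --- that a connected $2$-regular graph is a cycle, that $\gcd(k, 2k+1) = 1$, and that circular arc graphs are $1$-perfectly orientable --- are all immediate or already available, so I expect no real obstacle beyond this bookkeeping.
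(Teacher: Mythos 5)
Your proof is correct and takes essentially the same route as the paper: both arguments establish that $\overline{C_{2k+1}}$ is a circular arc graph and then invoke the fact that circular arc graphs are $1$-perfectly orientable. The only difference is that the paper identifies $\overline{C_{2k+1}}$ with the cycle power $C_{2k+1}^{k-1}$ and cites the (easy) fact that powers of cycles are circular arc, whereas you construct the equally spaced arc representation explicitly --- which is precisely the standard verification of that fact.
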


\begin{proof} Recall that the {\it $k$-th power} of a graph $G$ is the graph with the same vertex set as $G$ in which two distinct vertices are adjacent if and only if their distance in $G$ is at most $k$. It is easy to see (and also follows from the fact that the class of circular arc graphs is closed under taking powers~\cite{MR1206558}) that all powers of cycles are circular arc graphs. Therefore, the fact that the complement of every odd cycle is \po~follows from two facts: (i) that the complement of $C_{3}$ is~\po, and (ii) for every $k\ge 2$, the complement of the odd cycle $C_{2k+1}$ is isomorphic to a power of a cycle, namely to $C_{2k+1}^{k-1}$. \end{proof}

Since every disjoint union of paths is an induced subgraph of a sufficiently large odd cycle, Proposition~\ref{prop:powers-of-cycles} and Theorem~\ref{prop:operations} yield the following.

\begin{corollary}\label{cor:co-linear-forest} The complement of every disjoint union of paths is $1$-perfectly orientable.\end{corollary}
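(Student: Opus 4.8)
The plan is to realize an arbitrary disjoint union of paths $F$ as an induced subgraph of a sufficiently large odd cycle, and then to combine Proposition~\ref{prop:powers-of-cycles} (asserting that the complement of every odd cycle is \po) with the closure of the class of \po~graphs under vertex deletion established in item~6 of Theorem~\ref{prop:operations}.

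First I would construct the embedding. Write $F = P_{n_1}+\cdots+P_{n_m}$ and fix a cyclic ordering of its components. The idea is to lay out the components as pairwise disjoint arcs of consecutive vertices along a cycle $C_N$, inserting at least one unused vertex between every two consecutive arcs, including between the last arc and the first one so as to account for the wrap-around. A single omitted vertex between two neighboring arcs already forces their nearest endpoints to be at cyclic distance two, hence nonadjacent in $C_N$; consequently every pair of vertices lying in distinct arcs is at cyclic distance at least two. The same gap at the wrap-around prevents the two extreme endpoints of the whole layout from becoming adjacent and ensures that $C_N$ is not entirely consumed, so that no single component closes up into a cycle. Within a single arc, consecutive vertices are adjacent and non-consecutive ones are not, so the occupied vertices induce exactly a copy of each $P_{n_i}$ with no edges between distinct components; thus the induced subgraph is precisely $F$.

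It remains to arrange for $N$ to be odd. The total length is $N=\sum_i n_i + (\text{total gap size})$, and since the gap sizes may be increased freely, I can adjust the parity of $N$: enlarging one gap by a single vertex changes $N$ by one, so an odd value $N = 2k+1$ is always attainable. This exhibits $F$ as an induced subgraph of $C_{2k+1}$. Passing to complements, and using that complementation preserves the induced-subgraph relation, $\overline{F}$ is then an induced subgraph of $\overline{C_{2k+1}}$. By Proposition~\ref{prop:powers-of-cycles}, $\overline{C_{2k+1}}$ is \po, and by Theorem~\ref{prop:operations} the class of \po~graphs is closed under vertex deletion, hence under taking induced subgraphs. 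Therefore $\overline{F}$ is \po, as claimed.

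The argument is essentially routine, and no single step constitutes a genuine obstacle. The only point requiring mild care is the embedding: one must verify that single-vertex gaps already suppress all spurious adjacencies, both between distinct components and at the wrap-around, while still leaving enough freedom to fix the parity of $N$. The degenerate cases, such as $F$ having only isolated vertices or being a single short path, are handled by the same construction and need no separate treatment.
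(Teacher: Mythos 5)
Your argument is correct and is precisely the paper's own proof: the paper derives the corollary in one sentence by observing that every disjoint union of paths is an induced subgraph of a sufficiently large odd cycle and then combining Proposition~\ref{prop:powers-of-cycles} with closure under vertex deletion from Theorem~\ref{prop:operations}. Your write-up merely fills in the routine details of the embedding and the parity adjustment, which the paper leaves implicit.
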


The following theorem describes a set of minimal forbidden induced minors for the class of \po~graphs.

\begin{sloppypar}
\begin{theorem}\label{thm:non-1-po} Let ${\cal F} = \{F_1,F_2,F_5,\ldots,F_{12}\}\cup {\cal F}_3\cup {\cal F}_4$, where: \begin{itemize}
   \item graphs $F_1$, $F_2$ are depicted in Fig.~\ref{fig:1}, and
  \item ${\cal F}_3 = \{\overline{C_{2k}}\mid k\ge 3\}$, the set of complements of even cycles of length at least $6$,
  \item ${\cal F}_4 = \{\overline{K_2+C_{2k+1}}\mid k\ge 1\}$, the set of complements of the graphs obtained as the disjoint union of $K_2$ and an
      odd cycle,
  \item for $i \in \{5,\ldots, 12\}$, graph $F_i$ is the complement of the graph $G_{i-4}$, depicted in Fig.~\ref{fig:1}.
\end{itemize} Then, every graph in set ${\cal F}$ is a minimal forbidden induced minor for the class of $1$-perfectly \hbox{orientable} graphs. \end{theorem}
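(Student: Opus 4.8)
The plan is to verify, for each $H\in{\cal F}$, the two defining properties of a minimal forbidden induced minor: that $H$ is not \po, and that every \emph{proper} induced minor of $H$ is \po. For the second property it suffices to treat the one-step induced minors, namely $H-v$ for each vertex $v$ and $H/e$ for each edge $e$: since the class of \po~graphs is closed under vertex deletion and edge contraction (items 6 and 7 of Theorem~\ref{prop:operations}), and hence under induced minors, every proper induced minor of $H$ is an induced minor of some $H-v$ or $H/e$ and is therefore \po~once these are. Thus the argument splits into a \emph{non-membership} part and a \emph{minimality} part, and in each part the infinite families ${\cal F}_3,{\cal F}_4$ are handled uniformly in $k$, while the finitely many graphs $F_1,F_2,F_5,\dots,F_{12}$ are dispatched by direct verification.

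For non-membership I would treat the two infinite families by different tools, taking care to avoid circularity with Theorem~\ref{prop:co-bipartite-join} (whose forward direction itself invokes the present theorem). Each $\overline{C_{2k}}\in{\cal F}_3$ is co-bipartite, the two color classes of $C_{2k}$ being cliques in the complement, so by Theorem~\ref{thm:co-bipartite-circular-arc} it is enough to show that $\overline{C_{2k}}$ is not circular arc, i.e.\ (Theorem~\ref{thm:co-bip-circ-arc}) that it admits no good coloring. Here I would identify the induced $4$-cycles of $\overline{C_{2k}}$ --- they arise from pairs of disjoint, non-adjacent edges of $C_{2k}$ --- and show that the resulting ``opposite color'' constraints force \emph{all} crossing edges to receive the same color (chaining the constraints through the bipartite graph of crossing edges, which is connected for $k\ge 4$), contradicting the requirement that the two crossing edges of any one induced $4$-cycle differ; the smallest case $\overline{C_6}$ already exhibits a triangle of such constraints and is treated as a separate base case. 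For ${\cal F}_4$ I would argue directly through Corollary~\ref{cor:characterization-1po-complements} applied to $G=K_2+C_{2k+1}$: writing $K_2=\{a,b\}$ and using $\alpha(C_{2k+1})=k$, each of the independent sets $I_a,I_b$ misses at least $k+1$ of the cycle vertices, so at least $k+1$ of the sets $I_{c_i}$ must contain $a$ and at least $k+1$ must contain $b$; since $ab$ is an edge no $I_{c_i}$ can contain both, giving $2(k+1)\le 2k+1$, a contradiction. The small graphs $F_1,F_2$ and the complements $F_5,\dots,F_{12}$ I would certify individually --- the co-bipartite ones again via the good-coloring criterion, the remainder via Corollary~\ref{cor:characterization-1po-complements} or a short orientation/sink-counting argument as in the $\overline{C_6}$ case.

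For minimality the vertex deletions are immediate: $\overline{C_{2k}}-v=\overline{P_{2k-1}}$ is the complement of a path and so is \po~by Corollary~\ref{cor:co-linear-forest}, while deleting a vertex of $\overline{K_2+C_{2k+1}}=2K_1\ast\overline{C_{2k+1}}$ yields either $K_1\ast\overline{C_{2k+1}}$ (a universal vertex added to a \po~graph, by Proposition~\ref{prop:powers-of-cycles} and Theorem~\ref{prop:operations}) or $2K_1\ast\overline{P_{2k}}$, a join of two co-bipartite \po~graphs, which is \po~by the \emph{sufficiency} direction of Theorem~\ref{prop:co-bipartite-join}. The edge contractions are the crux, and I would classify them by the cyclic distance $d\ge 2$ between the endpoints of the contracted edge along the underlying cycle. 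If $d\ge 3$ the two endpoints share no cycle-neighbor, so the new vertex $w$ becomes \emph{universal}, and the contraction is a universal vertex added to a complement of a linear forest, hence \po; if $d=2$ the new vertex $w$ and the common cycle-neighbor form a pair of non-adjacent vertices joined to a complement of a path, so the contraction is a $2K_1$-join of co-bipartite \po~graphs and is \po~by Theorem~\ref{prop:co-bipartite-join}. The contractions inside $2K_1\ast\overline{C_{2k+1}}$ reduce to the same two patterns after factoring out the join.

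The step I expect to be the main obstacle is establishing, \emph{uniformly in $k$}, that $\overline{C_{2k}}$ admits no good coloring: the individual induced-$4$-cycle constraints are easy, but assembling them into a global contradiction requires the connectivity argument for the crossing-edge graph (which degenerates precisely at $\overline{C_6}$, forcing the triangle argument as a separate base case). A secondary, more mechanical obstacle is the bookkeeping for the ten exceptional graphs, whose non-membership and whose roughly two dozen one-step minors must each be certified; I would streamline this by exhibiting explicit edge clique covers via Theorem~\ref{thm:characterization-edge-clique-covers}, or explicit independent-set families via Corollary~\ref{cor:characterization-1po-complements}, and by recognizing the contracted graphs as complements of linear forests, universal-vertex extensions, or joins of co-bipartite \po~graphs.
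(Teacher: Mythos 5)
Your overall architecture (non-membership plus one-step minimality, justified by closure of the class under induced minors) matches the paper's, and one of your ingredients is a correct alternative to what the paper does: for ${\cal F}_4$, your counting argument via Corollary~\ref{cor:characterization-1po-complements} (at least $k+1$ of the sets $I_{c_i}$ must contain $a$, at least $k+1$ must contain $b$, yet no $I_{c_i}$ may contain both since $ab\in E(K_2+C_{2k+1})$, giving $2k+2\le 2k+1$) is complete, correct, and genuinely different from the paper's argument, which instead uses Lemma~\ref{lem:cycles} to propagate cyclic orientations of the induced $4$-cycles through $v_1,v_2$ around the odd cycle until parity forces two non-adjacent out-neighbors of $v_1$. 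Your minimality analysis of the infinite families is also sound and essentially the paper's: the paper observes uniformly that co-contracting a non-edge of $C_{2k}$ or of $K_2+C_{2k+1}$ yields a disjoint union of paths and applies Corollary~\ref{cor:co-linear-forest}, whereas your split into $d\ge 3$ and $d=2$, and your use of only the sufficiency half of Theorem~\ref{prop:co-bipartite-join}, reach the same conclusion while correctly avoiding the circularity in that theorem's necessity half.

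The genuine gap is the non-membership of ${\cal F}_3$ (and, secondarily, of $F_1,F_2,F_5,\dots,F_{12}$). The paper never proves from scratch that $\overline{C_{2k}}$ is not circular arc: it cites the Trotter--Moore characterization~\cite{MR0450140}, on whose list of minimal forbidden induced subgraphs for co-bipartite circular arc graphs both the family $\{\overline{C_{2k}}\mid k\ge 3\}$ and the graphs $F_5,\dots,F_{12}$ appear; that single citation also delivers non-membership of $F_5,\dots,F_{12}$ and the vertex-deletion half of their minimality for free. You propose instead to prove directly, uniformly in $k$, that $\overline{C_{2k}}$ admits no good coloring, and your sketch of that step is invalid as stated: connectivity of the ``must-differ'' constraint graph on crossing edges cannot by itself force a contradiction, since a system of difference constraints is $2$-colorable if and only if the constraint graph is bipartite, and a connected bipartite constraint graph is trivially satisfiable. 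What you actually need is an odd cycle of constraints for every $k\ge 3$: your triangle for $\overline{C_6}$ is one, and, for instance, $\{1,4\}$--$\{2,5\}$--$\{3,6\}$--$\{4,7\}$--$\{5,8\}$--$\{1,4\}$ is a $5$-cycle of constraints in $\overline{C_8}$, but you give no construction that works for all $k$ (one exists, e.g.\ the $k$ diameters $\{i,i+k\}$ form an odd constraint cycle when $k$ is odd, with a separate argument needed for even $k$, but this is exactly the work you flag as ``the main obstacle'' and leave undone). Until that odd-cycle construction is supplied---or replaced, as in the paper, by the citation to Trotter--Moore---your proposal does not establish that the graphs in ${\cal F}_3$ fail to be $1$-p.o.; the same caveat applies to the unspecified finite certification of $F_1,F_2$ (for which the paper has a one-line argument: they are triangle-free with more edges than vertices, contradicting Corollary~\ref{cor:upper-bound-on-theta}) and of the graphs $F_5,\dots,F_{12}$ and their roughly two dozen one-step minors.
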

\end{sloppypar}

 \begin{figure}[h!]
  \centering
\includegraphics[width=0.9\textwidth]{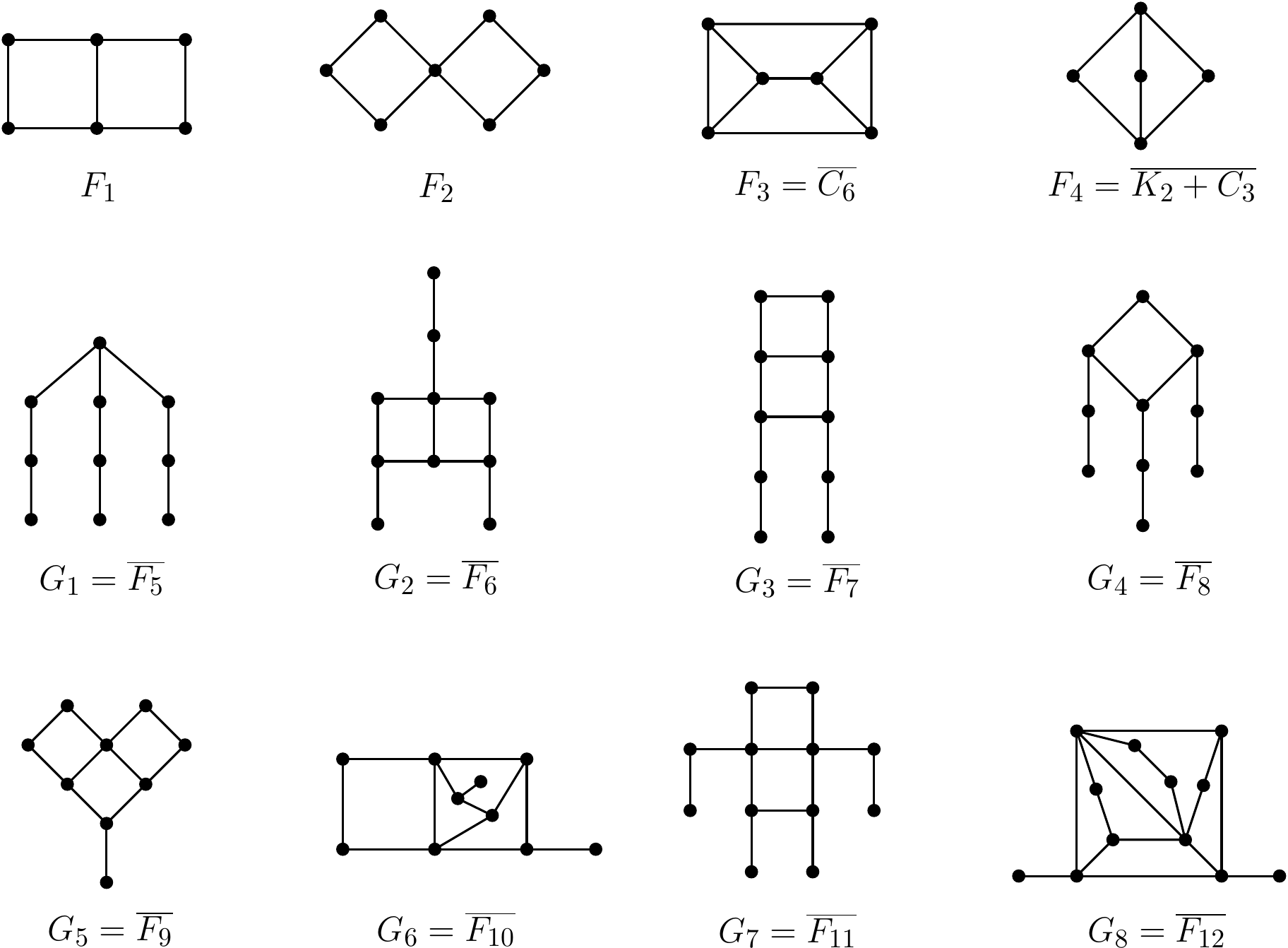} \caption{Four non-\po~graphs and $8$ complements of non-\po~graphs. Graphs $F_3$ and $F_4$ are the smallest members of families ${\cal F}_3$ and ${\cal F}_4$, respectively.}\label{fig:1} \end{figure}

\begin{proof} We need to show that each $F\in {\cal F}$ is not \po, but every proper induced minor of $F$ is. We first show that no graph in ${\cal F}$ is~$1$-p.o., and will argue minimality for all $F\in {\cal F}$ in the second part of the proof.

\medskip {\it No graph is in ${\cal F}$ is~$1$-p.o.}

First consider the graphs $F_1$ and $F_2$. Since they are both triangle-free, every edge clique cover of $F_i$ (for $i\in \{1,2\}$) contains all edges of $F_i$ and hence has at least $|E(F_i)|>|V(F_i)|$ members. Hence, Corollary~\ref{cor:upper-bound-on-theta} implies that $F_1$ and $F_2$ are not \po

The family ${\cal F}_3$ consists precisely of complements of even cycles of length at least $6$. In particular, every $F\in  {\cal F}_3$ is co-bipartite. By Theorem~\ref{thm:co-bipartite-circular-arc}, $F$ is \po~if and only if $F$ is circular arc. Since the family ${\cal F}_3$ is one of the six infinite families of minimal forbidden induced subgraphs for the class of circular arc co-bipartite graphs~\cite{MR0450140}, we infer that $F$ is not $1$-p.o.

Now let $F\in  {\cal F}_4$, that is, $F = \overline{K_2+C_{2k+1}}$ for some $k\ge 1$. We will prove that $F$ is not $1$-p.o.~using Lemma~\ref{lem:cycles}. Let the vertices of the cycle component of $\overline{F}$ be named $u_1,\ldots, u_{2k+1}$, according to a cyclic order of $C_{2k+1}$. Also, let the two vertices of the $K_2$ component of $\overline{F}$ be named $v_1$ and $v_2$. Suppose that $F$ admits a $1$-perfect orientation $D$. For every two consecutive vertices from the cycle we have an induced $C_4$ given by these two vertices together with $v_1$ and $v_2$. By Lemma~\ref{lem:cycles}, every such $C_4$ must be oriented cyclically. Consider the $C_4$ induced by vertices $\{u_1, u_2, v_1, v_2\}$. Without loss of generality we may assume that it is oriented as $v_1 \rightarrow{u_1} \rightarrow{v_2} \rightarrow{u_2} \rightarrow{v_1}$. This determines the orientation of the $C_4$ induced by $\{u_2,u_3,v_1,v_2\}$. Since the edge $\{v_{1},u_2\}$ is oriented as $u_2 \rightarrow{v_1}$, the edge $\{v_{1},u_3\}$ must be oriented as $v_1 \rightarrow{u_3}$. Proceeding along the cycle, we infer that $v_1\rightarrow u_i$ for odd $i$ and 
$u_i\rightarrow v_1$ for even $i$. However, this implies that $v_1\rightarrow u_1$ and 
$v_1\rightarrow u_{2k+1}$, contrary to the fact that $D$ is a $1$-perfect orientation of $F$. 
Therefore, $F$ is not \po

Each of the remaining graphs, $F_5$--$F_{12}$, belongs to the list of minimal forbidden induced subgraphs for the class of circular arc co-bipartite graphs~\cite{MR0450140}. By Theorem~\ref{thm:co-bipartite-circular-arc}, none of these graphs is $1$-p.o.

\medskip It remains to show minimality, that is, that {\it every proper induced minor of every graph in ${\cal F}$ is~$1$-p.o.}

First consider the graphs $F_1$ and $F_2$. Deleting any vertex of either $F_1$ or $F_2$ results in either a chordal graph or in a unicyclic graph, hence in a \po~graph (cf.~Proposition~\ref{prop:unicyclic}). Contracting any edge of $F_1$ results in a graph that is either chordal, or is obtained from a cycle by adding to it a simplicial vertex, hence in either case a~\po~graph. Contracting any edge of $F_2$ results in a graph that can be reduced to a cycle by removing true twins and simplicial vertices, hence this graph is also~\po

We are left with graphs that are defined in terms of their complements. To argue minimality for them, it will be convenient to understand the effect of performing the operation of edge contraction on a given graph on its complement. It can be seen that if $G$ is the graph obtained from a graph $H$ by contracting an edge $uv$, then $\overline{G}$ is the graph obtained from $\overline{H}$ identifying a pair of non-adjacent vertices (namely, $u$ and $v$) and making the new vertex adjacent exactly to the common neighbors in $\overline{H}$ of $u$ and $v$. We will refer to this operation as {\it co-contracting a non-edge}.

Let $F\in  {\cal F}_3$, that is, $F = \overline{C_{2k}}$ for some $k\ge 3$. Deleting a vertex from $F$ results in the complement of a path, which is~\po~by Corollary~\ref{cor:co-linear-forest}. Similarly, one can verify that co-contracting a non-edge of $\overline{F}$ results in a disjoint union of paths. Thus, every proper induced minor of $F$ is \po

Let $F\in  {\cal F}_4$, that is, $F = \overline{C_{2k+1}+K_2}$ for some $k\ge 1$. Deleting a vertex in the cycle component of $\overline{F}$ from $F$ results in the complement of a disjoint union of path, which is~\po~by Corollary~\ref{cor:co-linear-forest}. Deleting a vertex in the $K_2$ component of $\overline{F}$ from $F$ results in the graph that consists of the join of $K_1$ and the complement of an odd cycle, which is \po~by Theorem~\ref{prop:operations} and~Proposition~\ref{prop:powers-of-cycles}. Furthermore, co-contracting a non-edge of $\overline{F}$ results in a disjoint union of paths, and Corollary~\ref{cor:co-linear-forest} applies again. Thus, every proper induced minor of $F$ is \po{}

We recall that each of the remaining graphs, $F_5$--$F_{12}$, is a minimal forbidden induced subgraph for the class of circular arc co-bipartite graphs. Deleting a vertex from any of them results in a circular arc co-bipartite graph, hence in a \po~graph. Note that $F_{9}$ has $9$ vertices, each of $F_5, F_6, F_7, F_8, F_{10}$ has $10$ vertices, and $F_{11}$ and $F_{12}$ have $12$ vertices. Also note that since co-bipartite graphs are closed under edge contractions, in order to show that every graph obtained from one of the graphs $F_5$--$F_{12}$ by contracting an edge is \po, it suffices to argue that it is circular arc, which (since it is co-bipartite) is equivalent to verifying that it does not contain any of the minimal forbidden induced subgraphs for the class of circular arc co-bipartite graphs~\cite{MR0450140}. The only graphs with at most $10$ vertices on this list are $\overline{C_6}$, $\overline{C_8}$, $\overline{C_{10}}$, and graphs $F_5$--$F_{10}$. The list also contains a unique graph of order $11$; let $G_9$ denote its complement. Let $G\in \{\overline{F_5}, \ldots, \overline{F_{12}}\} = \{G_1,\ldots, G_8\}$. A direct inspection of the possible graphs resulting from co-contracting a non-edge of $G$ shows that every such graph has either at most $10$ vertices, in which case its complement is $\{C_6,C_8,C_{10}, G_1,\ldots, G_6\}$-free, or it has $11$ vertices, in which case its complement either has an isolated vertex and the rest is $\{C_6,C_8,C_{10}, G_1,\ldots, G_6\}$-free, or it is connected, of order $11$, and $\{C_6,C_8,C_{10}, G_1,\ldots, G_6,G_9\}$-free. Thus, in all cases contracting an edge of a graph in $\{F_5, \ldots, F_{12}\}$ results in a circular arc graph, hence in a \po~graph. This completes the proof. \end{proof}

Theorem~\ref{thm:non-1-po} implies that ${\cal F} \subseteq \tilde{\cal F}$, where $\tilde{\cal F}$ is the set of minimal forbidden induced minors for the class of \po~graphs. However, the complete set $\tilde{\cal F}$ is unknown. It is conceivable that one can obtain further graphs in $\tilde{\cal F}$ by computing the minimal elements with respect to the induced minor relation of the list of forbidden induced subgraphs for the class of circular arc co-bipartite graphs due to Trotter and Moore~\cite{MR0450140}. Besides the three small graphs $F_5,F_6,F_7$ and the family ${\cal F}_3$ of complements of even cycles of length at least $6$, the list contains five other infinite families, the smallest members of which are graphs $F_8,\ldots, F_{12}$, respectively. (In~\cite{MR0450140}, the lists represent the complementary property and are denoted by ${\cal T}_i$, ${\cal W}_i$, ${\cal D}_i$, ${\cal M}_i$, and ${\cal N}_i$, respectively.)

\begin{sloppypar}
\begin{problem} Determine the set of minimal forbidden induced minors for the class of {$1$-perfectly orientable graphs}. \end{problem}
\end{sloppypar}

\section{$1$-perfectly orientable cographs}\label{sec:cographs}

Bang-Jensen et al.~\cite{MR1244934} characterized \po~line graphs and \hbox{\po}~triangle-free graphs, and in Section~\ref{sec:co-bipartite-graphs} we characterized \po~co-bipartite graphs. We conclude the paper by characterizing \po~cographs, obtaining characterizations both in terms of forbidden induced subgraphs and in terms of structural properties. The class of {\it cographs} is defined recursively by stating that $K_1$ is a cograph, the disjoint union of two cographs is a cograph, the join of two cographs is a cograph, and there are no other cographs. It is well known (see, e.g.,~\cite{MR1686154}) that cographs can be characterized in terms of forbidden induced subgraphs by a single obstruction, namely the $4$-vertex path $P_4$.

\begin{theorem}\label{thm:cograph} For every cograph $G$, the following conditions are equivalent: \begin{enumerate}
  \item $G$ is $1$-perfectly orientable.
  \item $G$ is $K_{2,3}$-free.
  \item One of the following conditions holds:
\begin{itemize}
  \item $G\cong K_1$.
  \item $G\cong \overline{mK_2}$ for some $m\ge 2$.
  \item $G$ is the disjoint union of two smaller \po~cographs.
  \item $G$ is obtained from a \po~cograph by adding to it a universal vertex.
  \item $G$ is obtained from a \po~cograph by adding to it a true twin.
  \end{itemize}
\end{enumerate} \end{theorem}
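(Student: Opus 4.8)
The plan is to prove the three implications $1\Rightarrow 2$, $3\Rightarrow 1$ and $2\Rightarrow 3$, running the whole argument by induction on $|V(G)|$; the first two are short consequences of results already in hand, and $2\Rightarrow 3$ is the inductive heart. For $1\Rightarrow 2$ I would only recall that $K_{2,3}$ is not \po{} and that the class of \po{} graphs is closed under vertex deletion (item~6 of Theorem~\ref{prop:operations}), hence under induced subgraphs; so a \po{} graph cannot contain an induced $K_{2,3}$, and no cograph hypothesis is needed here. For $3\Rightarrow 1$, each of the five structural possibilities yields a \po{} graph: $K_1$ is trivially \po; $\overline{mK_2}$ is the complement of a disjoint union of paths (each $K_2$ is a $P_2$) and hence \po{} by Corollary~\ref{cor:co-linear-forest}; and disjoint union, adding a universal vertex, and adding a true twin preserve \po{} by items~1,~2, and~3 of Theorem~\ref{prop:operations}.

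The substance is $2\Rightarrow 3$. Let $G$ be a $K_{2,3}$-free cograph. If $G\cong K_1$ we are in the first case. If $G$ is disconnected, then $G=G_1+G_2$ for nonempty cographs $G_i$, each a smaller $K_{2,3}$-free cograph and hence \po{} by the induction hypothesis (via $2\Rightarrow 1$), placing $G$ in the disjoint-union case. So assume $G$ is connected with at least two vertices. If $G$ has a universal vertex $v$, then $G-v$ is a smaller $K_{2,3}$-free cograph, \po{} by induction, and $G$ arises from it by adding a universal vertex. If $G$ has no universal vertex but a pair of true twins $u,v$, then $G-v$ is again a smaller $K_{2,3}$-free cograph, \po{} by induction, and $G$ arises from it by adding a true twin. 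The remaining, key, case is that $G$ is connected with \emph{no} universal vertex and \emph{no} true twins, where I claim $G\cong\overline{mK_2}$ for some $m\ge 2$.

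To treat this case I would write $G=A\ast B$ as a join of two nonempty cographs, possible since $G$ is a connected cograph. First, neither part is complete: a vertex of a complete part would be adjacent to all of its own part and, through the join, to the whole other part, hence universal in $G$, which is excluded; in particular $|A|,|B|\ge 2$ and $\alpha(A),\alpha(B)\ge 2$. Next I use $K_{2,3}$-freeness: both (independent) sides of any induced $K_{2,3}$ lie in a single part, so the only way a $K_{2,3}$ can be spread across the join is to take a non-edge of $A$ as the $2$-side and an independent triple of $B$ as the $3$-side, or vice versa, the cross-edges being automatic. Since $A$ is not complete it has a non-edge, so to avoid such a $K_{2,3}$ we need $\alpha(B)\le 2$; symmetrically $\alpha(A)\le 2$. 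Hence $\alpha(A)=\alpha(B)=2$, i.e. $\overline{A}$ and $\overline{B}$ are triangle-free cographs.

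Finally I analyze triangle-free cographs. Passing to complements, $\overline{G}=\overline{A}+\overline{B}$, so it suffices to show $\overline{A}=aK_2$ and $\overline{B}=bK_2$ with $a,b\ge 1$, yielding $\overline{G}=mK_2$ with $m=a+b\ge 2$. A connected triangle-free cograph on at least two vertices is a join of two edgeless cographs, hence a complete bipartite graph $K_{p,q}$; thus each component of $\overline{A}$ is an isolated vertex or some $K_{p,q}$. An isolated vertex of $\overline{A}$ is a universal vertex of $A$, hence of $G$ — excluded. A component $K_{p,q}$ with $\max(p,q)\ge 2$ has two same-side vertices that are false twins in $\overline{A}$; from $N_{\overline{A}}(u_1)=N_{\overline{A}}(u_2)$ and $u_1\not\sim_{\overline{A}}u_2$ one obtains $N_A[u_1]=N_A[u_2]$, so they are true twins in $A$ and, their closed neighborhoods both gaining all of $B$, true twins in $G$ — also excluded. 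Hence every component of $\overline{A}$ is $K_{1,1}=K_2$, so $\overline{A}=aK_2$, and likewise $\overline{B}=bK_2$, giving $G\cong\overline{mK_2}$. I expect this final case to be the main obstacle: correctly enumerating how an induced $K_{2,3}$ can sit inside a join to extract the independence bounds, and then pushing the ``no universal vertex / no true twins'' hypotheses through the complement — in particular the bookkeeping that false twins in $\overline{A}$ become true twins in $A$ and survive into $G$ — so that each complete-bipartite component of $\overline{A}$ is forced down to a single edge.
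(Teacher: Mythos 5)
Your proof is correct and takes essentially the same route as the paper: the same ingredients for $1\Rightarrow 2$ and $3\Rightarrow 1$, and for $2\Rightarrow 3$ the same analysis of a connected cograph's join structure, using $K_{2,3}$-freeness across the join to force independence number $2$ on each part and the absence of universal vertices and true twins to force $G\cong\overline{mK_2}$ --- the paper phrases this via co-components of $G$ (disjoint unions of two cliques), while you phrase it via the components of $\overline{A}$ and $\overline{B}$ (complete bipartite graphs), which is the identical argument viewed in the complement. A minor point in your favor: you make explicit the induction on $|V(G)|$ needed to certify that the smaller pieces appearing in condition 3 are themselves \po{} cographs, a step the paper leaves implicit.
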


\begin{proof} The implication $1\Rightarrow 2$ follows from Theorems~\ref{prop:operations} and~\ref{thm:non-1-po}.

To show the implication $2\Rightarrow 3$, suppose that $G$ is a $K_{2,3}$-free cograph on at least two vertices that is not disconnected and does not have a universal vertex or a pair of true twins. We want to show that $G=\overline{mK_2}$. Since $G$ is not disconnected and $G\neq K_1$, its complement $\overline{G}$ is disconnected. Let $m\ge 2$ denote the number of {\it co-components} of $G$ (subgraphs of $G$ induced by the vertex sets of components of $\overline{G}$). If one of the co-components has exactly one vertex, then that vertex is universal in $G$, which is a contradiction. Therefore, each co-components has at least two vertices. The recursive structure of cographs implies that each co-component of $G$ is disconnected. In particular, it has independence number at least $2$. On the other hand, since $G$ is $K_{2,3}$-free, each co-component of $G$ has independence number at most $2$. This implies that each co-component is the disjoint union of two complete graphs. Since $G$ has no true twins, each co-component is isomorphic to $2K_1$, that is, $G\cong \overline{mK_2}$ for some $m\ge 2$, as claimed.

Finally, we show the implication $3\Rightarrow 1$. Suppose that $G$ is a cograph such that one of the five conditions in item $3$ holds. An inductive argument shows that $G$ is \po, using Theorem~\ref{prop:operations} and the fact that $K_1$ and all graphs of the form $\overline{mK_2}$ are \po~(which follows, e.g.,~from Corollary~\ref{cor:co-linear-forest}).
\end{proof}

\subsection*{Acknowledgement}

We are grateful to Aistis Atminas, Marcin J.~Kami\'nski, and Nicolas Trotignon for useful discussions on the topic and to Pavol Hell for helpful comments on an earlier draft.

\bibliographystyle{abbrv}

\begin{sloppypar} \bibliography{1-perfectly-orientable-bib}{} \end{sloppypar}

\end{document}